\newcommand{\BlackBoxes}{\global\overfullrule5pt}
\providecommand{\U}[1]{\protect\rule{.1in}{.1in}}
\newcommand{\R}{\mathbb{R}} 
\newcommand{\PP}{\mathbb{P}}
\newcommand{\EE}{\mathbb{E}}
\newcommand{\E}{\mathbb{E}}
\newtheorem{theorem}{Theorem}
\newtheorem{corollary}[theorem]{Corollary}
\newtheorem{lemma}[theorem]{Lemma}
\newtheorem{proposition}[theorem]{Proposition}
\theoremstyle{definition}
\newtheorem{remark}[theorem]{Remark}
\newtheorem{definition}[theorem]{Definition}
\numberwithin{equation}{section} \numberwithin{theorem}{section}
\def\0{\kern0pt\-\nobreak\hskip0pt\relax}
\def\makeoverbar#1#2#3#4#5#6#7{ \setbox0=\hbox{$\m@th#2\mkern#5mu{{}#3{}}\mkern#6mu$} 
\setbox1=\null \dimen@=#4\fontdimen8#13 \dimen@=3.5\dimen@
\advance\dimen@ by \ht0 \dimen@=-#7\dimen@ \advance\dimen@ by \wd0
\ht1=\ht0 \dp1=\dp0 \wd1=\dimen@
\dimen@=\fontdimen8#13 \fontdimen8#13=#4\fontdimen8#13
\rlap{\hbox to \wd0{$\m@th\hss#2{\overline{\box1}}\mkern#5mu$}}
\fontdimen8#13=\dimen@}
\def\mylabel#1#2{{\def\@currentlabel{#2}\label{#1}}}
\begin{document}
\title[  ]{Time-consistency in the mean-variance problem: A new perspective}
\author[N. \smash{B\"auerle}]{Nicole B\"auerle${}^*$}
\address[N. B\"auerle]{Department of Mathematics,
Karlsruhe Institute of Technology (KIT),  Karlsruhe, Germany}

\email{\href{mailto:nicole.baeuerle@kit.edu}{nicole.baeuerle@kit.edu}}

\author[A. \smash{Ja\'skiewicz}]{Anna  Ja\'skiewicz${}^*$}
\address[A. Ja\'skiewicz]{Faculty of Pure and Applied Mathematics, 
Wroc{\l}aw University of Science and Technology,  Wroc{\l}aw, Poland}

\email{\href{mailto:anna.jaskiewicz@pwr.edu.pl} {anna.jaskiewicz@pwr.edu.pl}}


\begin{abstract}
We investigate discrete-time mean-variance portfolio selection problems \linebreak viewed as a Markov decision process. We transform the problems
into a new model with deterministic transition function for which the Bellman optimality equation holds. 
In this way, we can solve the problem recursively and obtain a time-consistent solution, that is an optimal solution that meets the Bellman optimality principle. 
We apply our technique for solving explicitly a more general framework. 

\end{abstract}
\maketitle


\makeatletter \providecommand\@dotsep{5} \makeatother



\vspace{0.5cm}
\begin{minipage}{14cm}
{\small
\begin{description}
\item[\rm \textsc{ Key words}]
{\small  Bellman optimality principle, time-consistency, mean-variance portfolio selection, linear-quadratic optimal control}

\end{description}
}
\end{minipage}

\section{Introduction}

The Bellman optimality principle is one of the key results in optimal control theory. It asserts that the optimization of
dynamic processes with finite time horizon is equivalent to the solution of a recursive equation also known  as 
a  dynamic program, dynamic programming problem or Bellman optimality equation. The part of operations research
concerned with Markov decision processes (MDPs) is sometimes called dynamic programming. 

From the dynamic programming principle, it follows that an optimal policy established for the initial points (time and state)
is also optimal along the optimal trajectory.  This property is referred to as the time-consistency of the optimal policy, see Subsec.\ 1.2
in \cite{bjork2021}.  However, in reality, the Bellman principle often fails. It occurs when a dynamic optimization problem
is not time additively separable, or simply, 
non-separable.  Mathematically, it means that  the optimal value cannot be separated into the sum
of the present payoff and the optimal value term derived from tomorrow. The mean-variance optimization criterion belongs to this class
of non-separable models. The nonlinear term of the (conditional) variance in the cost functional causes the time-inconsistency, 
since it spoils the iterated-expectations property.  For Markov decision models, this problem was analyzed in different settings,   for instance, 
in \cite{filar1989variance, sobel1994mean} and \cite{mannor2011mean} for  discrete-time processes  and in \cite{guo2012mean}  
for  continuous-time processes.  In \cite{filar1989variance},  the method,  based on a  penalty for the variability in the stream of rewards, allows to
 obtain a solution  via linear programming, whereas  in \cite{sobel1994mean}  
Pareto optima in the sense of high mean and low variance of the stationary probability distribution  are considered. The objective of 
\cite{mannor2011mean}, on the other hand, is to compare different policy classes and discusses computational complexity.

A prominent problem in the financial economics for this optimization criterion is the mean-variance portfolio selection model.
There are two main approaches that provide a solution. In the first one, the investor fixes the initial point (time and state) and finds 
the optimal policy that minimizes his objective cost functional.  In other words,
the agent pre-commits  to follow the derived strategy, although at future dates it is not longer optimal.
The second solution is based on a game theoretic approach. 
Namely,  the investor plays an intrapersonal game, whose outcome is a subgame 
perfect equilibrium. To apply this technique, the objective function is decomposed into the investor's expected future objective plus
an additional term that controls his deviations. 
This method leads to a certain recursive formula, which produces a suboptimal solution. Therefore, 
the  obtained strategy is  sometimes  called  dynamically optimal \cite{basak2010dynamic}  or
time-consistent dynamically optimal \cite{vigna} or time-consistent 
\cite{czichowsky2013time}. 
However, in this paper we follow the definition of the time-consistency given in \cite{bjork2021} (see also Definition \ref{tc}).

The literature contains a number of examples of the outlined approaches. 
For obvious reasons, we describe here only some positions.  The paper of Li and Ng
\cite{li2000optimal}  is the first work on a dynamic mean-variance portfolio model in discrete-time. The optimal policy is obtained
by an embedding method that transforms the original problem into a standard linear-quadratic (LQ)  model. 
This leads to time-inconsistency of an investor's strategy.  Alternatively, as noted by Yoshida \cite{yoshida2018solution},
the multi-period mean-variance portfolio selection problem can be  solved explicitly 
under the so-called deterministic mean-variance trade-off condition. 
The latter approach  that  generates a subgame perfect equilibrium 
was applied in \cite{bjork2021}  for discrete-time  models and in \cite{basak2010dynamic, bjork2014mean} for continuous-time models.
All these three papers  handle a Markovian setting, whereas the non-Markovian framework was studied in \cite{czichowsky2013time}.
Finally, there is also a possibility  to optimize the conditional mean-variance criterion
 myopically in each
step separately over the payoffs in the next period, see 
\cite{campbell2002strategic}.

Apart from the aforementioned methodologies, Cui et al. \cite{cui2014unified}  
suggests mean-field techniques to deal with time-inconsistency
in this model. The idea  is to
include the expected values of the state process into the underlying dynamic system and the objective functional. 
In such a way, the problem becomes separable but it is constrained. 
Their approach was generalized in
\cite{barbieri2020mean} for discrete-time linear systems with multiplicative noise. 
As an application,  the multi-period  optimal control policy for a general mean-variance problem was derived. 
An alternative approach,  based on mean-field theory and establishing 
the pre-commitment optimal solution in the continuous time mean-variance portfolio selection problem
was provided in \cite{bensoussan2014mean}. In particular, it was proved that 
 the optimal policy satisfies a Hamilton-Jacobi-Bellman equation that also involves the probability 
 distribution of the corresponding optimal wealth process.  

In this paper, we start our analysis  from  the discrete-time mean-variance portfolio selection problem modelled by an MDP. 
Although the problem is 
time-inconsistent, we show that this is not true if more information is included. More precisely, our idea relies on  a
replacement of the original state space (the levels of the wealth) 
by the set of probability measures on it. This trick, in turn, allows us to transform the original MDP
into a purely deterministic model, for which we are able to write a standard Bellman equation. 
We refer to this new model as a {\it population version } MDP. Thus, we provide for the first time a  backward induction algorithm that  generates the time-consistent policy. Furthermore, we show that if the initial distribution 
is the Dirac delta concentrated at some point $x_0$,
then the investor's time-consistent strategy (i.e., the strategy that is optimal and satisfies the Bellman optimality principle)
from the {\it population version} MDP coincides with  the pre-commitment optimal strategy
given in \cite{li2000optimal} or \cite{bauerle2011markov}. 
We also discuss and compare the optimal policy with a suboptimal solution based on 
game theoretic tools. 

Next we apply our approach to more general time-inconsistent problems outlined in Subsec.\ 1.6 in \cite{bjork2021}. We deal with the 
case, where the cost functions in the objective functional  are independent of the initial points (time and state). Within
a {\it population version} MDP framework,
the Bellman optimality principle works and a  time-consistent policy exists. 
As an application, we solve an LQ regulator problem and compare our solution with the equilibrium strategy from \cite{bjork2021}. 
It is worthy to mention that LQ control models were also studied in the mean-field setting, 
see \cite{elliott2013discrete, ni2014indefinite, ni2016time}
or \cite{pham2016discrete}. However, as noted in \cite{cui2014unified},   
this direction requires new analytical tools and solution techniques.
Therefore, within this view our method demonstrates a  direct and simple way in dealing with time-inconsistent problems 
that improves the quality of the solutions. 

The paper is organized as follows. In Sec. \ref{sec:2} we present the multi-period portfolio selection problem and describe three approaches. Sec. \ref{sec:general} generalizes our techniques to
solve an MDP in the finite time-horizon, in which the cost functional contains a non-linear term. Our findings are applied to an LQ model in Sec. \ref{sec:application}. A relationship to the mean-field framework and possible extensions are discussed in Sec. 
\ref{sec:meanfield}. Finally, we conclude in Sec. \ref{sec:conclusion}. The last part, Sec. \ref{sec:appendix}, is devoted to solving one-step models from Sections \ref{sec:2} and \ref{sec:application}.

\section{Mean-variance problem} \label{sec:2}

We start with the definition of a {\it time-consistent} solution taken from \cite{bjork2021}.

\begin{definition} \label{tc}
Assume that we have an $N$-finite time horizon optimization problem $\mathcal{P}(0,x)$ starting at time point 
$0$ with an initial state $x.$  Suppose  that the optimal control is ${\bf u}=(u^{0,x}_0,u^{0,x}_1,\ldots, u^{0,x}_N).$ 
The optimal control is 
{\it time-consistent}, if it satisfies the Bellman optimality principle, i.e., it is also optimal on the time interval $\{n,\ldots, N\}$
for any $n\in\{1,\ldots,N-1\}.$
\end{definition}

According to this definition, a {\it time-consistent} control must be optimal for the problem $\mathcal{P}(0,x)$ and, roughly
speaking, ${\bf u}$ must be independent of the initial state. More formally, this means that
the optimal control for $\mathcal{P}(n,y)$
is $(u^{n,y}_n,\ldots,u^{n,y}_N),$ where $ u^{n,y}_k=u^{0,x}_k$ for all time points $n\le k\le N$ and states $x,y.$ 

The mean-variance problem is often cited as a problem, where the optimal policy is {\it not  time-consistent.}
 The reason for the time-inconsistency in this model is the conditional variance term. Although the tower property holds
 for the conditional expectation, it does not hold for the  conditional variance term. The reader is referred to 
 \cite{basak2010dynamic} or 
 \cite{czichowsky2013time} for the specific formula for this expression and further discussion.  
 This fact, however, implies that  a  Bellman equation is not
the standard Bellman equation anymore, because the value functions appearing
 on the left-hand side and right-hand side  are not the same. In other words, the formulae for these functions are different. 
 Hence, although we may use
backward induction for deriving a solution,  this solution need not be optimal, see Subsec. \ref{sec:sol2}.
The other approach is to find an optimal policy 
for the whole problem and not care at being optimal for any time interval $\{n,\ldots,N\},$ see Subsec. \ref{sec:sol1}. 
It  is known as a {\it pre-commitment}  policy and it is not {\it time-consistent}. 
However, we show that this optimal solution can be seen as {\it time-consistent} and this is  a question of information.
More precisely, it becomes {\it time-consistent,}  when we extend the state space. 
We will explain this in Subsec. \ref{sec:sol3}. First we present the model and recalling the established approaches. 

\subsection{The model}
Let $Y$ be a Borel set. We always equip it with its Borel $\sigma$-algebra $\mathcal{B}(Y).$ 
By $\Pr(Y)$ we denote the set of probability measures on $(Y,\mathcal{B}(Y)).$ 

Let us consider the following specific model which is taken from Sec.\ 4.6 in
\cite{bauerle2011markov}. We have a financial market with one riskless bond. 
We assume that the riskless bond has interest rate $i_{n+1}$ during the period  $[n,n+1)$, i.e.,
$$S_{n+1}^0=(1+i_{n+1})S^0_n\quad\mbox{for time points}\quad n=0,\ldots,N-1,\quad\mbox{where}\quad  S_0^0\equiv 1.$$  
In addition, we have $d$ risky assets with price processes
$$ S_{n+1}^k  = S_n^k \tilde R^k_{n+1},\quad k=1,\ldots,d, \quad n=0,\ldots,N-1$$ and $S_0^k\equiv 1$ for all $k,$ 
where $\tilde R^k_{n+1}$ are random variables on a joint probability space $(\Omega,\mathcal{ F},\PP)$, which are assumed to be almost sure positive. 
In what follows, we use the notation $S_n^\top =(S_n^1,\ldots,S_n^d)$ and 
$\tilde R_n^\top =(\tilde R_n^1,\ldots,\tilde R_n^d)$. 
We assume that the random vectors $\tilde R_1,\ldots \tilde R_N$ are independent. Further, 
let $R_n^k :=\frac{ \tilde R_n^k}{1+i_n}-1$ and $R_n^\top = (R_n^1,\ldots,R_n^d)$ be the relative risk process. 
We further assume that $\E\|R_n\|<\infty,$ $\E R_n \neq 0$ and that the covariance matrix
 $\Sigma_n:=\big(Cov(R_n^j,R_n^k)\big)$ of these random vectors $R_n$ is positive definite for all $n$.  
We can invest into this financial market. Let $A_n^\top =(A^1_n,\ldots,A_n^d)\in\R^d.$ 
Here, $A_n^k$ is the amount of money 
invested into asset $k$ during time interval $[n,n+1)$. Thus, if 
$X_n$ denotes the wealth of the agent at time $n,$ then
$X_n-A_n \cdot e,$ $e=(1,\ldots,1)^\top \in\R^d$ 
is the amount invested in the riskless bond, where $\cdot$ is the usual scalar product.  
Suppose that  the initial wealth  of the agent is $x_0\in \R.$  
Then, the evolution of the wealth is given by the following difference equation
\begin{equation}\label{eq:wealth}
    X_{n+1} = (1+i_{n+1})(X_n + A_n \cdot R_{n+1}).
\end{equation} 

The mean-variance model can be viewed as 
a non-stationary Markov decision process (MDP) with the following items:
\begin{itemize}
\item $(E,\mathcal{B}(E))$  is the state space with  $E:= \R$; here $x\in E$ denotes the wealth,
\item $(A,\mathcal{B}(A))$
 is the action space with $A :=\R^d;$ here $a\in A$ is the amount of money invested in the risky assets,
\item $\mathcal{R}:= [-1,\infty)^d$ is the space of values for the relative risk, i.e.,  $r\in\mathcal{R}$ denotes the relative risk,
\item $T_n(x,a,r)=(1+i_{n+1})(x+a\cdot r)$ is the transition function at time $n$, see \eqref{eq:wealth}, 
where $x\in E,$ $a\in A$ and $r\in \mathcal{R};$ the transition probability is given by
$$ Q_n(B|x,a)= \PP\big(T_n(x,a,R_{n+1})\in B\big), \quad\mbox{where } B\in\mathcal{B}(E).$$
\end{itemize}

In what follows, let $\Pi$ be the set of all deterministic policies $\pi=(\pi_0,\ldots,\pi_{N-1}).$
In other words, each $\pi_n,$ $n=0,\ldots, N-1,$ is a measurable mapping defined 
on the history space of the process up to the $n$-th state
with values in $A$.  Moreover, let
 $$ F := \{ \phi: E \to A \ | \ \phi \mbox{ is measurable}\}.$$ 
 Hence, $F$ is the set of decision rules that take into account only the
current wealth of the agent.  The sequence  $(\phi_0,\ldots,\phi_{N-1}),$ where $\phi_k\in F,$ $k=0,\ldots,N-1,$
determines a Markovian investment strategy to the agent. 

Let $(\widetilde{ \Omega}, \widetilde{\mathcal{F}})$   be a measurable space with 
	$\widetilde{\mathcal{F}}=\mathcal{B}(E)\otimes\cdots\otimes\mathcal{B}(E)$ ($N+1$ times).
 Then, for a fixed policy
$\pi=(\pi_{0},\ldots,\pi_{N-1})\in \Pi$  and an  initial
state $x_{0} \in E$, a probability measure $\PP^{\pi}_{x_0}$ 
is uniquely defined on $(\widetilde{ \Omega}, \widetilde{\mathcal{F}})$ 
according to the Ionescu-Tulcea theorem (see Appendix B in \cite{bauerle2011markov}).
By $\E^{\pi}_{x_0}$ we denote the expectation operator with respect to the measure
$\PP^{\pi}_{x_0}.$ 
 
The mean-variance problem is  given as a Lagrange problem by
$$ P(\lambda) \left\{ \begin{array}{l}
Var_{x_0}^\pi [X_N] -2\lambda \E_{x_0}^\pi [X_N] \to \inf\\
\pi\in \Pi
\end{array}\right.$$
where $\pi=(\pi_0,\ldots,\pi_{N-1})\in\Pi$. 
Though not being separable, the problem can be solved 
with techniques from the theory of MDPs. 
Before we state and compare different solutions, let us define 
\begin{eqnarray} \label{eq:ell}\nonumber
    C_n := \E[R_nR_n^\top],&\quad& \ell_n := (\E [R_n])^\top C_n^{-1} \E [R_n]\\
    d_N:=1,&\quad&d_n:=d_{n+1}(1-\ell_{n+1}),\quad n=0,\ldots, N-1. 
\end{eqnarray}
The matrix $C_n$ is positive definite and thus regular. It can also be shown that $\ell_n, d_n\in(0,1)$ 
(see Lemma 4.6.4 in \cite{bauerle2011markov}).

\subsection{Solution 1: Via auxiliary LQ-problems}\label{sec:sol1}
Problem $P(\lambda)$ can be solved with the help of an auxiliary problem 
$$ QP(b) \left\{ \begin{array}{l}
 \E_{x_0}^\pi [(X_N-b)^2] \to \inf\\
\pi\in F^N
\end{array}\right.$$
which is a standard LQ-problem. In $QP(b)$ it is clear that we can replace $\Pi$ by $F^N,$ 
since the optimal value will be attained under a Markovian investment strategy and  $QP(b)$ 
can be solved by backward induction. 
The optimal policy for $QP(b)$ is for $n=0,1,\ldots,N-1$ and $x\in\R$ given by 
\begin{equation}\label{eq:optimalpolicyQb}
 \phi_n^b(x)= \Big(\frac{bS^0_n}{S^0_N}-x\Big) C_{n+1}^{-1} \E R_{n+1}.
\end{equation} 
This was shown in Theorem 4.6.5(b) in \cite{bauerle2011markov}.
Further, we know that if $\pi^b=(\phi^b_0,\ldots,\phi^b_{N-1})$ is optimal for $P(\lambda)$, 
then $\pi^b$ will also be optimal for
$ QP(b) $ with $b=\E_{x_0}^{\pi^*} [X_N]+\lambda$ (see Lemma 4.6.3 in \cite{bauerle2011markov}).
Under $\pi^b$ the expected wealth is equal to
\begin{align}\nonumber
\E_{x_0}^{\pi^b} [X_N] &= \E_{x_0}^{\pi^b} [(1+i_N)(X_{N-1}+\phi_{N-1}^b(X_{N-1})\cdot R_{N})]\\ \nonumber
&= \E_{x_0}^{\pi^b} \Big[(1+i_N)\Big(X_{N-1}(1-R_N^\top C^{-1}_N\E R_N)+\frac{bS^0_{N-1}}{S^0_N}
R_N^\top C^{-1}_N\E R_N\Big)\Big]\\ \nonumber
&=(1+i_N)\Big( \E_{x_0}^{\pi^b} [X_{N-1}](1-\ell_N)+\frac{b S^0_{N-1}}{S_N^0}\ell_N\Big)\\\nonumber
&=
(1+i_N)d_{N-1}
\E_{x_0}^{\pi^b} [X_{N-1}]+b \ell_N\\ \nonumber
&=
(1+i_N)d_{N-1}
(1+i_{N-1}) \Big(\E_{x_0}^{\pi^b} [X_{N-2}](1-\ell_{N-1})+\frac{b S^0_{N-2}}{S_N^0}\ell_{N-1}\Big)
+b \ell_N\\ \nonumber
&=
(1+i_N)(1+i_{N-1})d_{N-2}\E_{x_0}^{\pi^b} [X_{N-2}] +b\ell_{N-1}d_{N-1}+b\ell_N=\ldots\\\nonumber
&= \label{eq:EX}
\prod_{k=1}^N(1+i_k)d_0x_0 + b\sum_{k=1}^N\ell_kd_k
= 
S^0_Nd_0x_0 + b\sum_{k=1}^N\ell_kd_k\\ &=S^0_Nd_0x_0 + b(1-d_0).
\end{align}
The last equality is due to the definition of $d_n$. Since $\ell_k d_k = d_k-d_{k-1}$ we obtain
\begin{eqnarray*}
\sum_{k=1}^N\ell_kd_k&=& \sum_{k=1}^N (d_k-d_{k-1}) = (d_N-d_0)=1-d_0.
 \end{eqnarray*}
Thus, when we set $b=\E_{x_0}^{\pi^b} [X_N]+\lambda,$ we obtain 
\begin{equation}\label{valb}
 b=  S_N^0x_0  +\frac{ \lambda}{d_0}.
 \end{equation}
Plugging \eqref{valb} into \eqref{eq:optimalpolicyQb}, we have partly proved the following:
\begin{proposition} \label{sol1}
An optimal policy $\pi^o=(\phi^o_1,\ldots,\phi^o_{N-1})$ for $P(\lambda)$ is given by
 $$\phi_n^o(x) =\Big(\Big(x_0  +\frac{ \lambda}{S_N^0d_0}\Big)S^0_n-x\Big) C_{n+1}^{-1} \E R_{n+1}, \quad x\in E,$$
 for  $n=0,1,\ldots,N-1,$
and the value for $P(\lambda)$ is
$$V^o(x_0):= \lambda^2\Big(1-\frac 1{d_0}\Big)-2 \lambda x_0 S_N^0.$$
\end{proposition}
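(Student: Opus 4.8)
The optimal policy itself is already obtained in the computation preceding the statement: inserting \eqref{valb} into the optimal decision rule \eqref{eq:optimalpolicyQb} of $QP(b)$ produces exactly $\phi_n^o$, and by Lemma 4.6.3 in \cite{bauerle2011markov} a policy that is optimal for $QP(b)$ with $b=\E_{x_0}^{\pi^b}[X_N]+\lambda$ is also optimal for $P(\lambda)$; hence $\pi^o=\pi^b$ solves $P(\lambda)$. The only remaining task is to evaluate $V^o(x_0)=Var_{x_0}^{\pi^o}[X_N]-2\lambda\,\E_{x_0}^{\pi^o}[X_N]$. The plan is to express both terms through quantities of the auxiliary problem $QP(b)$. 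The starting point is the elementary decomposition
$$\E_{x_0}^{\pi}[(X_N-b)^2]=Var_{x_0}^{\pi}[X_N]+\big(\E_{x_0}^{\pi}[X_N]-b\big)^2,$$
which, for the specific choice $b=\E_{x_0}^{\pi^b}[X_N]+\lambda$, turns into $Var_{x_0}^{\pi^o}[X_N]=\E_{x_0}^{\pi^b}[(X_N-b)^2]-\lambda^2$, since the last bracket equals $-\lambda$.

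The one genuinely new ingredient is the optimal value of the quadratic problem, which I claim equals
$$\E_{x_0}^{\pi^b}[(X_N-b)^2]=d_0\big(S_N^0x_0-b\big)^2.$$
I would prove this by backward induction on the value functions $V_n$ of $QP(b)$, using the ansatz $V_n(x)=d_n\big(\tfrac{S_N^0}{S_n^0}x-b\big)^2$. The terminal identity $V_N(x)=(x-b)^2$ holds because $d_N=1$. For the induction step one evaluates $\inf_{a}\E\big[V_{n+1}\big((1+i_{n+1})(x+a\cdot R_{n+1})\big)\big]$; here the factor $\tfrac{S_N^0}{S_{n+1}^0}(1+i_{n+1})=\tfrac{S_N^0}{S_n^0}$ collapses the discounting, the minimizing $a$ reproduces $\phi_n^b$ from \eqref{eq:optimalpolicyQb}, and the minimal value acquires the factor $(1-\ell_{n+1})$, so that $d_{n+1}$ becomes $d_n$ by the definition in \eqref{eq:ell}. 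Evaluating at $n=0$ with $S_0^0=1$ gives the displayed value. Alternatively, this value is part of Theorem 4.6.5 in \cite{bauerle2011markov} and may simply be quoted.

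It then remains to substitute \eqref{valb}, that is $b=S_N^0x_0+\lambda/d_0$. This yields $S_N^0x_0-b=-\lambda/d_0$, hence $\E_{x_0}^{\pi^b}[(X_N-b)^2]=\lambda^2/d_0$ and $Var_{x_0}^{\pi^o}[X_N]=\lambda^2(\tfrac{1}{d_0}-1)$, while \eqref{eq:EX} gives $\E_{x_0}^{\pi^o}[X_N]=S_N^0x_0+\lambda(1-d_0)/d_0$. Combining,
$$V^o(x_0)=\lambda^2\Big(\tfrac{1}{d_0}-1\Big)-2\lambda\Big(S_N^0x_0+\lambda\tfrac{1-d_0}{d_0}\Big)=\lambda^2\Big(1-\tfrac{1}{d_0}\Big)-2\lambda x_0S_N^0,$$
as claimed. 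I expect the backward-induction evaluation of the quadratic value function to be the only real step; once the ansatz $d_n\big(\tfrac{S_N^0}{S_n^0}x-b\big)^2$ is guessed, everything else is bookkeeping with the recursion for $d_n$ and the identity $\ell_kd_k=d_k-d_{k-1}$ already exploited in \eqref{eq:EX}.
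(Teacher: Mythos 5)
Your argument is correct, and it reaches the value $V^o(x_0)$ by a slightly different computational route than the paper. The paper's own proof is shorter: it quotes the closed-form first and second moments of $X_N$ under $\pi^b$ from Theorem 4.6.5(c) of \cite{bauerle2011markov}, namely $\E_{x_0}^{\pi^b}[X_N]=x_0S_N^0d_0+b(1-d_0)$ and $\E_{x_0}^{\pi^b}[X_N^2]=(x_0S_N^0)^2d_0+b^2(1-d_0)$, substitutes \eqref{valb}, and computes $Var-2\lambda\E$ directly. You instead pass through the optimal value of the auxiliary problem, using the decomposition $\E_{x_0}^{\pi}[(X_N-b)^2]=Var_{x_0}^{\pi}[X_N]+(\E_{x_0}^{\pi}[X_N]-b)^2$ together with the fixed-point choice $b=\E_{x_0}^{\pi^b}[X_N]+\lambda$ to get $Var_{x_0}^{\pi^o}[X_N]=\E_{x_0}^{\pi^b}[(X_N-b)^2]-\lambda^2$, and you supply the backward induction with ansatz $V_n(x)=d_n\bigl(\tfrac{S_N^0}{S_n^0}x-b\bigr)^2$ that yields $\E_{x_0}^{\pi^b}[(X_N-b)^2]=d_0(S_N^0x_0-b)^2$. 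Your induction is sound (the identity $\tfrac{S_N^0}{S_{n+1}^0}(1+i_{n+1})=\tfrac{S_N^0}{S_n^0}$ and the one-step factor $1-\ell_{n+1}$ work out exactly as you describe, and the minimizer reproduces \eqref{eq:optimalpolicyQb}), so your version is more self-contained: it only needs the value of $QP(b)$, which you rederive, rather than both moment formulas from the reference. What it costs you is the extra induction; what it buys is avoiding the second-moment formula and making the appearance of $\lambda^2(1/d_0-1)$ as the variance transparent. One caveat you share with the paper: Lemma 4.6.3 of \cite{bauerle2011markov} as stated gives that an optimal policy for $P(\lambda)$ solves $QP(b)$ with $b=\E[X_N]+\lambda$, whereas concluding optimality of $\pi^b$ for $P(\lambda)$ from its optimality in $QP(b)$ at the self-consistent $b$ of \eqref{valb} uses the converse embedding argument; both you and the paper treat this as already settled by the preceding discussion, so it is not a gap relative to the paper's own standard of rigor.
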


\begin{proof} 
It remains to show the optimal value for $P(\lambda).$  From Theorem 4.5.6(c) in \cite{bauerle2011markov}, 
we get under $\pi^b$ from \eqref{eq:optimalpolicyQb} (see also the previous computation in \eqref{eq:EX}):
\begin{eqnarray*}
\E_{x_0}^{\pi^b} [X_N]&=&x_0S_N^0d_0+b(1-d_0),\\
\E_{x_0}^{\pi^b} [X_N^2]&=& (x_0S_N^0)^2d_0+b^2(1-d_0).
\end{eqnarray*}
Plugging \eqref{valb} into above expressions, we easily get the conclusion. 
\end{proof}

This policy is considered to be time-inconsistent, because the initial wealth $x_0$  enters the formula.
Such a  type of policy (depending on the current and initial states)  is in MDP theory called  a semi-Markov policy.
In the economic and financial literature, it  is sometimes known as a {\it pre-commitment} strategy. 
In other words, the agent cares only at being time consistent at the initial point $x_0$
and does not care of being time-consistent at future time points $n=1,\ldots,N-1.$ More precisely, if the wealth at time $n=1$ is $x_1\neq x_0$, 
then the previously computed restriction $(\phi^o_1,\ldots,\phi^o_{N-1})$  is not optimal for the remaining time horizon.

\begin{remark}
An alternative approach is to write $P(\lambda)$ as 
$$ P(\lambda) \left\{ \begin{array}{l}
\min_{b\in\R} \E_{x_0}^\pi\Big[(X_N-b)^2\Big] -2\lambda \E_{x_0}^\pi [X_N] \to \inf\\
\pi\in \Pi
\end{array}\right.$$
using the fact that $Var(X)= \min_{b\in\R} \E[(X-b)^2]$. Since we can interchange the infima, we can first solve 
$$ \inf_{\pi\in\Pi} \E_{x_0}^\pi\Big[(X_N-b)^2\Big] -2\lambda \E_{x_0}^\pi [X_N] $$ 
and afterwards minimize w.r.t.\ $b$. This yields exactly the same solution  and we do not repeat the computation here.
\end{remark}

\subsection{Solution 2: Equilibrium strategies} \label{sec:sol2}
Another way to deal with this problem 
 is to look for equilibrium strategies, 
see Sec. 5 in  \cite{bjork2021}. These strategies are computed by backward induction with the aid of a
so-called system of Bellman equations. In this way, we obtain a policy which 
is sometimes called dynamically optimal (see \cite{vigna}) or a subgame perfect equilibrium,
but it need not be  {\it time-consistent} according to Definition \ref{tc}, since it is not optimal for the problem 
$P(\lambda)$. We show this fact below. 

Bj\"ork et al. \cite{bjork2021} conjecture first  that an equilibrium strategy is 
linear. Then, plugging this finding into their system of equations, they 
solve the problem inductively starting from the  time point $N-1.$  
Using their algorithm to our setting,  we obtain  the following:

\begin{proposition}
An equilibrium strategy  $\pi^{e}=(\varphi_0,\ldots,\varphi_{N-1})$ for $P(\lambda)$ is given by
 $$\varphi_n(x)\equiv  \lambda \frac{S_n^0}{S_N^0} (\E R_{n+1})^\top \Sigma_{n+1}^{-1},\quad  x\in E,$$ 
 for  $n=0,1,\ldots,N-1,$
and the value 
under this equilibrium strategy for $P(\lambda)$ is
$$V^{e}(x_0):= -2\lambda x_0S_N^0- \lambda^2 \sum_{n=1}^N (\E R_n)^\top \Sigma_n^{-1} 
\E R_n = -2\lambda x_0S_N^0- \lambda^2 \sum_{n=1}^N  \frac{\ell_n}{1-\ell_n}$$
with $\ell_n$ from \eqref{eq:ell}. 
\end{proposition}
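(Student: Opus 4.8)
The plan is to follow the game-theoretic recipe of Björk et al.\ \cite{bjork2021}: write down the system of Bellman equations that characterizes a subgame perfect equilibrium, conjecture the form of the equilibrium value function together with an auxiliary ``expected terminal wealth'' function, and verify the conjecture by backward induction from $n=N$. First I would introduce, for a candidate equilibrium $\pi^e$, the value $V_n(x):=Var^{\pi^e}_{n,x}[X_N]-2\lambda\E^{\pi^e}_{n,x}[X_N]$ and the conditional expected terminal wealth $g_n(x):=\E^{\pi^e}_{n,x}[X_N]$, with terminal data $V_N(x)=-2\lambda x$ and $g_N(x)=x$. The heart of the construction is the local optimization at stage $n$, where the agent at wealth $x$ picks $a$ while all future selves follow $\pi^e$. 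Applying the law of total variance to $Var_{n,x}^{a}[X_N]$ and using the relation $Var_{n+1,y}[X_N]=V_{n+1}(y)+2\lambda g_{n+1}(y)$ to eliminate the inner variance, the $2\lambda g_{n+1}$ contributions cancel and the local objective collapses to
\begin{equation*}
J_n(x,a)=\E\big[V_{n+1}(X_{n+1})\big]+Var\big(g_{n+1}(X_{n+1})\big),\qquad X_{n+1}=(1+i_{n+1})(x+a\cdot R_{n+1}),
\end{equation*}
whose minimizer defines $\varphi_n(x)$, whose minimal value defines $V_n(x)$, and where $g_n(x)=\E[g_{n+1}(X_{n+1})]$ propagates $g$. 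I expect this reduction --- putting the system of Bellman equations into the clean form above --- to be the main conceptual obstacle, since it is precisely the step where the non-separability of the variance must be absorbed into the auxiliary function $g$.

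With the system in hand, I would carry the ansätze $g_n(x)=\tfrac{S_N^0}{S_n^0}x+\beta_n$ and $V_n(x)=-2\lambda\tfrac{S_N^0}{S_n^0}x+\nu_n$ (both affine, consistent with $\beta_N=\nu_N=0$) through the induction. Substituting these and using $Var(X_{n+1})=(1+i_{n+1})^2 a^\top\Sigma_{n+1}a$ together with $1+i_{n+1}=S_{n+1}^0/S_n^0$, the objective becomes
\begin{equation*}
J_n(x,a)=-2\lambda\frac{S_N^0}{S_n^0}\big(x+a\cdot\E R_{n+1}\big)+\Big(\frac{S_N^0}{S_n^0}\Big)^2 a^\top\Sigma_{n+1}a+\nu_{n+1}.
\end{equation*}
Since $\Sigma_{n+1}$ is positive definite, $J_n$ is strictly convex in $a$; setting its gradient to zero yields the unique minimizer $a=\lambda\frac{S_n^0}{S_N^0}\Sigma_{n+1}^{-1}\E R_{n+1}$, which is independent of $x$ and, by symmetry of $\Sigma_{n+1}^{-1}$, matches the claimed $\varphi_n$. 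Plugging it back gives $\nu_n=\nu_{n+1}-\lambda^2(\E R_{n+1})^\top\Sigma_{n+1}^{-1}\E R_{n+1}$, and evaluating at $n=0$ (where $S_0^0=1$) reproduces the first expression for $V^e(x_0)=V_0(x_0)$.

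Finally, to obtain the second form of $V^e$, I would rewrite each quadratic form $(\E R_n)^\top\Sigma_n^{-1}\E R_n$ in terms of $\ell_n$ from \eqref{eq:ell}. Since $C_n=\Sigma_n+\E R_n(\E R_n)^\top$ is a rank-one update of $\Sigma_n$, the Sherman--Morrison formula gives, writing $s:=(\E R_n)^\top\Sigma_n^{-1}\E R_n$, the identity $\ell_n=(\E R_n)^\top C_n^{-1}\E R_n=s-\frac{s^2}{1+s}=\frac{s}{1+s}$, hence $s=\frac{\ell_n}{1-\ell_n}$. This rank-one update is the only nontrivial algebraic fact required, and it converts $\sum_{n=1}^N(\E R_n)^\top\Sigma_n^{-1}\E R_n$ into $\sum_{n=1}^N\frac{\ell_n}{1-\ell_n}$, completing the argument. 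Everything beyond the Bellman reduction and this Sherman--Morrison step is routine affine bookkeeping.
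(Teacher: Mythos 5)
Your proof is correct and follows essentially the same route as the paper, which itself only sketches this step by invoking the Bj\"ork et al.\ backward-induction algorithm with a linear conjecture and then spells out the Sherman--Morrison identity $(\E R_n)^\top\Sigma_n^{-1}\E R_n=\ell_n/(1-\ell_n)$ exactly as you do. Your write-up actually supplies more detail than the paper (the law-of-total-variance reduction, the affine ans\"atze for $V_n$ and $g_n$, and the quadratic minimization), and all of these computations check out.
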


This strategy does not depend on the wealth, but only on the time point $n$ of a decision. 
Note that the last equation in the formula for $V^{e}(x_0)$ follows from the fact that according to the Sherman-Morrison formula
\begin{align*}
 \Sigma_n^{-1} = C_n^{-1} +\frac{\Sigma_n^{-1} \E R_n \E R_n^\top \Sigma_n^{-1}}{1+\E R_n^\top \Sigma_n^{-1} \E R_n}.
\end{align*}
Thus,
\begin{align*}
 \E R_n^\top \Sigma_n^{-1} \E R_n & =\ell_n   
 +\frac{\big(\E R_n^\top \Sigma_n^{-1} \E R_n\big)^2}{1+\E R_n^\top \Sigma_n^{-1} \E R_n},
\end{align*}
which implies 
\begin{align*}
  \E R_n^\top \Sigma_n^{-1} \E R_n & = \frac{\ell_n}{1-\ell_n}.
\end{align*}
When comparing the values of Solutions 1 and 2 we get
\begin{eqnarray*}
&V^{o}(x_0)<V^{e}(x_0)\\
\Leftrightarrow&-2\lambda x_0S_N^0- \lambda^2\Big(\frac{1-d_0}{d_0}\Big)
<  -2\lambda x_0S_N^0- \lambda^2 \sum_{k=1}^N  \frac{\ell_k}{1-\ell_k}\\
\Leftrightarrow & \frac{1-d_0}{d_0} >  \sum_{k=1}^N  \frac{\ell_k}{1-\ell_k}\\
\Leftrightarrow & \frac{1-\prod_{k=1}^N (1-\ell_k)}{\prod_{k=1}^N (1-\ell_k)} >  \sum_{k=1}^N  \frac{\ell_k}{1-\ell_k}\\
\Leftrightarrow & \prod_{k=1}^N(h_k+1) -1>\sum_{k=1}^N h_k   
\end{eqnarray*}
for positive  $h_k=\ell_k/(1-\ell_k).$
The latter inequality is true by a version of the  Weierstrass inequality, see Sec. 7 in \cite{mitrinovic}.

The difference of the two values for the single-asset case and different time horizons is given in Figure \ref{fig:diff}. 
We present 
a stationary model with  two-point distribution of  $\tilde R$ given by $\PP(\tilde R = 1+4i)=\PP(\tilde R=1-i)=\frac12,$ 
where $i$ is an interest rate in each period, 
and $\lambda=1.$  In this case, $\ell = \frac{1}{26}$ 
and the difference does not depend on $i$. 
For $N=1$, the two-values coincide. However, the difference gets larger, the larger the time-horizon is. 
Thus, the equilibrium strategy  is suboptimal for $P(\lambda),$ although is found by backward induction.  
Since it is suboptimal, it is not time-consistent according to Definition \ref{tc}. 
\begin{figure}
\centering
\includegraphics[scale=0.4]{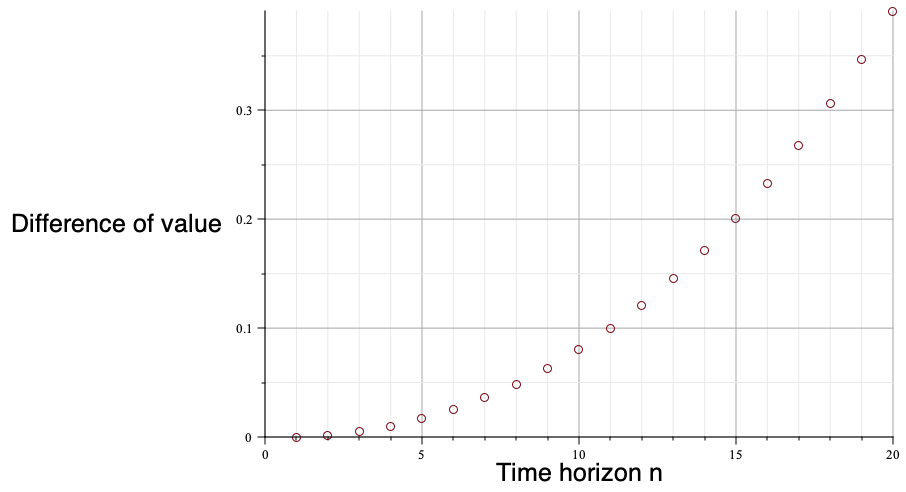}
 \caption{Difference of target values under the optimal policy 
and the equilibrium strategy for a stationary model with $\ell = 1/26$ for different time horizons.}
 \label{fig:diff}
\end{figure}

\subsection{Solution 3: Population version}  \label{sec:sol3}
In this part, we show that the solution obtained in Proposition \ref{sol1} can be seen as a time-consistent solution
provided that we enlarge the state space. More precisely, the idea relies on replacing the state space $E$ 
by the set of probability measures on $E.$ 
In this way, the problem becomes purely deterministic and more importantly, 
as can be seen below, the Bellman principle holds.
The advantage of our approach is that we can solve the problem directly as an 
MDP without making use of any auxiliary problem. 
Using backward induction we obtain a {\it time-consistent } policy for the investor in this {\it population version} model. 

In our new setting,  a {\it population version } MDP is defined by the following items:
\begin{itemize}
\item $\hat E := \Pr(E)$ is the state space, where $\mu\in \hat E$ denotes the distribution of  the wealth,
\item $ \hat A :=F$ is the action space, where $\phi\in \hat A$ is the decision rule to be applied,
\item $\hat T_n(\mu, \phi)=\mu'$ is 
the transition function, where 
$$\mu'(B)= \int \PP\big((1+i_{n+1})(x+\phi(x) \cdot  R_{n+1}) \in B\big)\mu(dx) = \int Q_n^\phi(B|x) \mu(dx) $$ 
for a Borel set $B\subset E,$ where $Q_n^\phi(B|x) :=Q_n(B|x,\phi(x)),$ $x\in E;$
alternatively,
 $\mu' = \mathcal{L}\big((1+i_{n+1})(X+\phi(X)\cdot R_{n+1})\big)$ with $X\sim \mu$
independent of $R_{n+1}$, 
\item $\hat c_N(\mu)= \int (x^2 -2\lambda x)\mu(dx) -\Big( \int x\mu(dx)\Big)^2 $ is the terminal cost.
\end{itemize}
The transition functions $\hat T_n$ are purely deterministic, which implies that this problem can 
be seen as a deterministic dynamic program. Without loss of generality, 
we restrict to the Markovian decision rules $\hat\phi_n\in\hat F$, where
$$ \hat F := \{ \hat\phi: \hat E \to \hat A \}.$$
We place no measurability requirements on these mappings.

The aim is now to solve
$$ \hat P(\lambda) \left\{ \begin{array}{l}
Var_{\mu_0}^\pi [X_N] -2\lambda \E_{\mu_0}^\pi [X_N] \to \inf\\
\pi\in \Pi
\end{array}\right.$$
This means that the variance and expectation 
are taken w.r.t.\ the product measure 
 $\PP_{\mu_0}^\pi$  defined by $\mu_0$ and 
 the transition kernels of $(X_n)$ generated by the policy $\pi=(\pi_0,\ldots,\pi_{N-1}).$ 
Clearly, the problem $\hat P(\lambda) $ becomes  non-trivial if $N\ge 2.$

Observe that the initial capital is drawn at random at the beginning, i.e., $X_0,$ is not deterministic, 
but has a distribution $\mu_0$. In case $\mu_0=\delta_{x_0},$ the problem reduces to the initial problem $P(\lambda).$ 
An alternative interpretation is that we consider a continuum of agents 
with wealth distributed according to $\mu_0.$

\begin{definition} \label{def1}
For a fixed policy $\hat \pi = (\hat\phi_0,\ldots,\hat\phi_{N-1})$ define
\begin{align}
\nonumber
 \phi_0 &:= \hat\phi_0(\mu_0),\quad 
 \mu_1 := \hat T_0(\mu_0,\phi_0),\\  \nonumber
   \phi_1 &:= \hat\phi_1(\mu_1),\quad
 \mu_2 := \hat T_1(\mu_1,\phi_1),\\  \nonumber
 &\vdots\\ \label{eq:sequence}
 \phi_{N-1} &:= \hat\phi_{N-1}(\mu_{N-1}),\quad \mu_{N} := \hat T_{N-1}(\mu_{N-1},\phi_{N-1}).
\end{align} 
The sequence of actions defines a policy $\pi=(\phi_0,\phi_1,\ldots,\phi_{N-1})$ in the original problem formulation. 
\end{definition}

From this definition we immediately have:

\begin{lemma} \label{lemp1}
Under policy $\hat\pi=(\hat\phi_0,\ldots ,\hat\phi_{N-1})$, the distribution of $X_k$ is equal to $\mu_k$,
where $\mu_k$ are defined in (\ref{eq:sequence}) for $k=0,1,\ldots, N.$
\end{lemma}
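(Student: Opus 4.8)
The plan is to argue by induction on $k$, exploiting the fact that the deterministic transition $\hat T_n$ of the population version MDP is, by its very definition, the push-forward of the stochastic wealth recursion \eqref{eq:wealth} under the decision rule $\phi_n$. Concretely, recall from Definition \ref{def1} that the policy $\pi=(\phi_0,\ldots,\phi_{N-1})$ applies at time $k$ the decision rule $\phi_k=\hat\phi_k(\mu_k)$, and that $\mu_{k+1}=\hat T_k(\mu_k,\phi_k)$.

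For the base case $k=0$, the initial wealth $X_0$ is by construction of $\hat P(\lambda)$ distributed according to $\mu_0$, so $\mathcal{L}(X_0)=\mu_0$. For the inductive step, assume $\mathcal{L}(X_k)=\mu_k$. Under $\pi$ the wealth evolves by \eqref{eq:wealth} as
$$X_{k+1}=(1+i_{k+1})\big(X_k+\phi_k(X_k)\cdot R_{k+1}\big),$$
whereas the definition of the transition function gives
$$\mu_{k+1}=\hat T_k(\mu_k,\phi_k)=\mathcal{L}\big((1+i_{k+1})(X+\phi_k(X)\cdot R_{k+1})\big),$$
with $X\sim\mu_k$ taken independent of $R_{k+1}$. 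By the inductive hypothesis $X_k\sim\mu_k$, so the two right-hand sides define the same law provided $X_k$ is independent of $R_{k+1}$; granting this, we conclude $\mathcal{L}(X_{k+1})=\mu_{k+1}$, which closes the induction.

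The only point that requires a word of justification --- and hence the main (minor) obstacle --- is the independence of $X_k$ from $R_{k+1}$. This holds because $X_k$ is a measurable function of $X_0,R_1,\ldots,R_k$: the deterministic decision rules $\phi_0,\ldots,\phi_{k-1}$ inject no additional randomness into the recursion. Since the return vectors $\tilde R_1,\ldots,\tilde R_N$ (and therefore $R_1,\ldots,R_N$) are assumed independent, with $X_0$ independent of the returns, $R_{k+1}$ is independent of the $\sigma$-field generated by $X_0,R_1,\ldots,R_k$, and in particular of $X_k$. With this established the equality of laws is immediate at each step, so the lemma follows.
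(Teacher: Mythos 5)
Your proof is correct and follows essentially the same induction as the paper's, which likewise identifies the law of $X_{k+1}$ with $\hat T_k(\mu_k,\phi_k)$ "by construction." Your explicit justification of the independence of $X_k$ from $R_{k+1}$ is a welcome bit of extra care that the paper leaves implicit, but it does not change the argument.
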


\begin{proof}
 The proof is by induction. For
 $k=0,$ the statement is obvious.  
 Suppose that $X_k\sim \mu_k$ for $k=1,\ldots ,n.$ Then, 
 $$\mu_{n+1}=\hat T_n(\mu_n,\hat\phi_n(\mu_n))=\hat T_n(\mu_n,\phi_n)=
 \int \PP\big((1+i_{n+1})(x+\phi_n(x) \cdot R_{n+1}) \in 
\bullet\big)\mu_n(dx).$$
 By construction, the distribution of $X_{n+1}$ is given by $\mu_{n+1}.$ This implies the result.
\end{proof}

Next for a fixed policy $\hat \pi = (\hat\phi_0,\ldots,\hat\phi_{N-1})$  define 
\begin{eqnarray*}
 J_{N,\hat\pi}(\mu) &:=& \hat c_N(\mu),\\
 \mbox{and}\quad J_{n,\hat\pi}(\mu) &:=&  J_{n+1,\hat\pi}(\hat T_{n}(\mu,\hat\phi_{n}(\mu))) \quad\mbox{for  }
  n=1,\ldots,N.
\end{eqnarray*}
Then, again making use of induction we obtain:

\begin{lemma} \label{lemp2}
Under policy $\hat \pi = (\hat\phi_0,\ldots,\hat\phi_{N-1})$, 
$$  J_{0,\hat\pi}(\mu_0) = Var_{\mu_0}^{\pi} [X_N] -2\lambda \E_{\mu_0}^{\pi} [X_N],$$
where $\pi$ on the right-hand side  is defined as in \eqref{eq:sequence}.
\end{lemma}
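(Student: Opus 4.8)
The plan is to telescope the recursion defining $J_{n,\hat\pi}$ against the concrete sequence $(\mu_k)$ constructed in \eqref{eq:sequence}, so that $J_{0,\hat\pi}(\mu_0)$ collapses to the terminal cost $\hat c_N$ evaluated at $\mu_N$, and then to identify this terminal cost with the mean-variance objective via Lemma \ref{lemp1}. First I would prove by a short induction that $J_{0,\hat\pi}(\mu_0) = J_{k,\hat\pi}(\mu_k)$ for every $k=0,1,\ldots,N$. For $k=0$ this is trivial. For the inductive step, recall from Definition \ref{def1} that $\phi_k = \hat\phi_k(\mu_k)$ and $\mu_{k+1} = \hat T_k(\mu_k,\phi_k)$; applying the defining recursion of $J$ at the argument $\mu_k$ gives $J_{k,\hat\pi}(\mu_k) = J_{k+1,\hat\pi}\big(\hat T_k(\mu_k,\hat\phi_k(\mu_k))\big) = J_{k+1,\hat\pi}(\mu_{k+1})$, which advances the claim. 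At $k=N$ this yields $J_{0,\hat\pi}(\mu_0) = J_{N,\hat\pi}(\mu_N) = \hat c_N(\mu_N)$.

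Next I would invoke Lemma \ref{lemp1}, which identifies $\mu_N$ as the law of $X_N$ under the original policy $\pi$ defined in \eqref{eq:sequence}. Substituting the definition of the terminal cost, $\hat c_N(\mu_N) = \int (x^2 - 2\lambda x)\,\mu_N(dx) - \big(\int x\,\mu_N(dx)\big)^2$, I would rewrite the integrals as moments of $X_N$ under $\PP^\pi_{\mu_0}$: the first integral equals $\E^\pi_{\mu_0}[X_N^2] - 2\lambda\,\E^\pi_{\mu_0}[X_N]$, while the subtracted square equals $\big(\E^\pi_{\mu_0}[X_N]\big)^2$. Combining the two second-moment terms into $\E^\pi_{\mu_0}[X_N^2] - \big(\E^\pi_{\mu_0}[X_N]\big)^2 = Var^\pi_{\mu_0}[X_N]$ then gives exactly $\hat c_N(\mu_N) = Var^\pi_{\mu_0}[X_N] - 2\lambda\,\E^\pi_{\mu_0}[X_N]$, which is the asserted identity.

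The argument is entirely routine and I expect no genuine obstacle. The only point requiring care is the bookkeeping between the two indexing conventions—the abstract argument $\mu$ in the definition of $J_{n,\hat\pi}$ versus the concrete iterates $\mu_k$ from \eqref{eq:sequence}—and making sure the recursion is applied at the point $\mu_k$ rather than at an arbitrary measure, so that the deterministic transition $\hat T_k$ reproduces precisely the next iterate $\mu_{k+1}$. Once the telescoping is carried out and Lemma \ref{lemp1} supplies the identification $X_N \sim \mu_N$, the conclusion is immediate from the definition of the variance.
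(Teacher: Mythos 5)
Your proposal is correct and follows exactly the paper's own argument: telescope the recursion $J_{k,\hat\pi}(\mu_k)=J_{k+1,\hat\pi}(\mu_{k+1})$ down to $\hat c_N(\mu_N)$, then use Lemma \ref{lemp1} to identify $\mu_N$ with the law of $X_N$ under $\pi$ and unpack the definition of $\hat c_N$ into variance and expectation. Your remark about applying the recursion at the concrete iterate $\mu_k$ rather than an arbitrary measure is the only point of care, and you handle it correctly.
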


\begin{proof}
Indeed, note that
$$
J_{0,\hat\pi}(\mu_0) = J_{1,\hat\pi}(\hat T_0(\mu_0,\hat\phi_{0}(\mu_0)))=J_{1,\hat\pi}(\mu_1)=\ldots=
J_{N,\hat\pi}(\mu_N)\\
   =\hat c_N(\mu_N).
$$
The claim now  follows from the definition of $\hat c_N$ and Lemma \ref{lemp1}.
\end{proof}

Finally, we define for $n=0,\ldots,N-1$:
\begin{align}\nonumber
 J_{N}(\mu) &:= \hat c_N(\mu),\\ \label{eq:recursion}
 J_{n}(\mu) &:= \inf_{\hat\phi\in \hat F} J_{n+1}(\hat T_{n}(\mu,\hat\phi(\mu))).
\end{align}
If the  minimum in \eqref{eq:recursion} is
attained, we denote it by $\hat \phi^*_{n}(\mu).$ Using backward induction, we get the following result.

\begin{theorem}\label{theo:main_mv} 
\begin{itemize}
\item[(i)]  If the initial distribution is $\mu_0,$ then the value of the problem  $\hat P(\lambda)$ is $J_0(\mu_0)$    and 
$$ J_{0}(\mu_0) =  (S^0_N)^2 d_0 \Big( \int x^2 \mu_0(dx) - \big(\int x\mu_0(dx)\big)^2\Big) -
2\lambda S_N^0 \int x\mu_0(dx)-\lambda^2\Big(\frac{1}{d_0}-1\Big).$$
\item[(ii)] The optimal policy in the population version model  
is given by $\hat \pi^*=(\hat\phi^*_0,\ldots ,\hat\phi^*_{N-1})$, where
$$ \hat \phi^*_{n}(\nu)(x)= 
\left(\frac{\lambda  S^0_n}{d_n S^0_N }+\int x'\nu(dx')  -x\right)C_{n+1}^{-1}\E R_{n+1},
$$ for 
$ n=0,\ldots,N-1,$ $\nu\in \hat E$ and $x\in E.$
\end{itemize}
\end{theorem}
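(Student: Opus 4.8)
The plan is to prove both parts simultaneously by backward induction on the recursion \eqref{eq:recursion}, with the ansatz that each value function is affine in the mean and variance of the current distribution. Writing $m(\mu):=\int x\,\mu(dx)$ and $v(\mu):=\int x^2\mu(dx)-m(\mu)^2$, I would conjecture
$$J_n(\mu)=\alpha_n\,v(\mu)+\beta_n\,m(\mu)+\gamma_n,\qquad n=0,\dots,N,$$
with terminal data $\alpha_N=1$, $\beta_N=-2\lambda$, $\gamma_N=0$. This base case is exactly $\hat c_N(\mu)$, since $\int x^2\mu(dx)=v(\mu)+m(\mu)^2$ gives $\hat c_N(\mu)=v(\mu)-2\lambda m(\mu)$. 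Once the coefficient recursions are solved, part (i) is the special case $n=0$, $\mu=\mu_0$, and part (ii) is read off from the minimiser produced at each step.

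The crux is the one-step minimisation $J_n(\mu)=\inf_{\phi\in F}J_{n+1}(\hat T_n(\mu,\phi))$, which under the ansatz becomes $\inf_\phi\big[\alpha_{n+1}Var(X')+\beta_{n+1}\E[X']+\gamma_{n+1}\big]$, where $X'=(1+i_{n+1})(X+\phi(X)\cdot R_{n+1})$ with $X\sim\mu$ independent of $R_{n+1}$. The difficulty is that $Var(X')=\E[(X')^2]-(\E[X'])^2$ is \emph{not} separable over $x$, so one cannot naively minimise $\phi$ pointwise. I would resolve this exactly as in the Remark after Proposition \ref{sol1}: since $\alpha_{n+1}>0$ (verified below), use $Var(X')=\min_{b}\E[(X'-b)^2]$ and interchange the two infima to obtain
$$J_n(\mu)=\inf_{b\in\R}\ \inf_{\phi}\ \Big(\alpha_{n+1}\E\big[(X'-b)^2\big]+\beta_{n+1}\E[X']\Big)+\gamma_{n+1}.$$
For fixed $b$ the inner functional is $\int g_b(x,\phi(x))\,\mu(dx)$ with $g_b(x,a)$ a strictly convex quadratic in $a$ (Hessian $2\alpha_{n+1}(1+i_{n+1})^2C_{n+1}\succ0$ by positive definiteness of $C_{n+1}$), so it is minimised pointwise and the minimiser is the linear rule $\phi_b(x)=(K(b)-x)C_{n+1}^{-1}\E R_{n+1}$ for an explicit scalar $K(b)$. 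This is precisely the one-step LQ problem treated in Sec.\ \ref{sec:appendix}, and it shows the infimum over all measurable $\phi$ is attained within the linear family.

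It then remains to carry out the scalar optimisation. Restricting to $\phi(x)=(K-x)\rho$ with $\rho:=C_{n+1}^{-1}\E R_{n+1}$, the identities $\rho^\top\E R_{n+1}=\rho^\top C_{n+1}\rho=\ell_{n+1}$ give
$$\E[X']=(1+i_{n+1})\big(m(\mu)+\ell_{n+1}(K-m(\mu))\big),$$
$$Var(X')=(1+i_{n+1})^2\big(v(\mu)(1-\ell_{n+1})+\ell_{n+1}(1-\ell_{n+1})(K-m(\mu))^2\big),$$
so $J_{n+1}(\hat T_n(\mu,\phi))$ is a quadratic in the single variable $t:=K-m(\mu)$. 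Minimising it (positive leading coefficient, again by $\alpha_{n+1}>0$) and matching the ansatz yields
$$\alpha_n=\alpha_{n+1}(1+i_{n+1})^2(1-\ell_{n+1}),\quad \beta_n=\beta_{n+1}(1+i_{n+1}),\quad \gamma_n=\gamma_{n+1}-\frac{\beta_{n+1}^2\,\ell_{n+1}}{4\alpha_{n+1}(1-\ell_{n+1})}.$$
Using $S_{n+1}^0=(1+i_{n+1})S_n^0$ and $d_n=d_{n+1}(1-\ell_{n+1})$, these solve downward from $N$ to $\alpha_n=(S_N^0/S_n^0)^2d_n$ and $\beta_n=-2\lambda S_N^0/S_n^0$; and since $\beta_{n+1}^2/(4\alpha_{n+1})=\lambda^2/d_{n+1}$ together with $\ell_{n+1}/\big(d_{n+1}(1-\ell_{n+1})\big)=1/d_n-1/d_{n+1}$, the $\gamma$-recursion telescopes to $\gamma_n=-\lambda^2(1/d_n-1)$. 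Evaluating at $n=0$ with $S_0^0=1$ gives part (i). The optimiser computed en route is $t^\ast=\lambda S_n^0/(d_nS_N^0)$, whence $K=m(\mu)+t^\ast$ and $\phi^\ast(x)=(K-x)\rho$ is exactly the rule in part (ii) with $\nu=\mu$.

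The main obstacle, as indicated, is the non-separability caused by the $(\E[X'])^2$ term: the whole argument hinges on the $\min_b$ device converting the one-step problem into a genuine pointwise quadratic minimisation, and this is legitimate only because $\alpha_{n+1}>0$ at every stage. I would therefore make the positivity bookkeeping explicit: $\alpha_N=1>0$ and $\alpha_n=\alpha_{n+1}(1+i_{n+1})^2(1-\ell_{n+1})>0$ since $\ell_{n+1}\in(0,1)$ (Lemma 4.6.4 in \cite{bauerle2011markov}), so the reduction is valid throughout the induction. A secondary point to state carefully is the measurable-selection step, namely that the pointwise minimiser $\phi_b$ is an admissible element of $F$; this is immediate here because it is an explicit affine function of $x$.
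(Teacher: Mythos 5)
Your proposal is correct and follows essentially the same route as the paper: backward induction with the invariant form ``constant times variance plus linear term in the mean plus constant'' (your $\alpha_n,\beta_n,\gamma_n$ are exactly the paper's $d_n(S_N^0/S_n^0)^2$, $-2\lambda S_N^0/S_n^0$ and the telescoped $-\lambda^2(1/d_n-1)$), and the one-step problem resolved via $Var(Y)=\min_b\E[(Y-b)^2]$, interchange of infima, and pointwise quadratic minimisation, which is precisely the argument in the paper's Appendix \ref{sec:appendix1}. Your explicit positivity bookkeeping for $\alpha_{n+1}$ and the closed-form moment computation over the linear family are welcome clarifications, but they do not change the substance of the argument.
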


\begin{proof}   
We wish to find an optimal decision rule $\hat\phi^*_{N-1}$ at time point 
$N-1.$ Let  $X_{N-1}\sim \nu.$ Then,
\begin{align}\label{eq:onestep} \nonumber
J_{N-1}(\nu)=& \inf_{\hat\phi\in \hat F} J_N(\hat T_{N-1}(\nu,\hat\phi(\nu)))=
\inf_{\phi\in F} \hat c_N(\hat T_{N-1}(\nu,\phi))\\=& 
\inf_{\phi\in F}\left( \E_{N,\hat T_{N-1}(\nu,\phi)} \Big[(X_{N})^2-2\lambda X_N\Big]- 
\Big(\E_{N,\hat T_{N-1}(\nu,\phi)}[X_N]\Big)^2\right)
\\  \nonumber
=&(1+i_N)^2  \inf_{\phi\in F} \E_{N-1,\nu}\Big[(X_{N-1} +\phi(X_{N-1}) \cdot R_N)^2+ \\&-\frac{2 \lambda} {1+i_N}(X_{N-1}+\phi(X_{N-1})\cdot R_N) 
-\Big(\E_{N-1,\nu} \Big[X_{N-1}+\phi(X_{N-1})\cdot R_N\Big]\Big)^2 \Big].
 \nonumber
\end{align} 
We use here the notation $\E_{n,\nu}$ to indicate that we start at the time point $n$ and the random variable 
$X_n$ has distribution $\nu.$  In case $n=0,$ we simply write $\E_\nu.$
By Appendix \ref{sec:appendix1} 
the solution of the problem is  
$$ \hat \phi^*_{N-1}(\nu)(x)= \left(\frac{\lambda S_{N-1}^0}{d_{N-1}S_N^0}+\int x'\nu(dx')  -x\right)C_{N}^{-1}\E R_{N}.$$
This also implies that
\begin{eqnarray*}
 J_{N-1}(\nu)
 &=& d_{N-1}\Big(\frac{S_N^0}{S_{N-1}^0}\Big)^2 \Big(\E_{N-1,\nu}[X_{N-1}^2] -(\E_{N-1,\nu} [X_{N-1}])^2+\\
 && \hspace*{3cm}-\frac{2\lambda}{d_{N-1}S_N^0/S_{N-1}^0}
  \E_{N-1,\nu} [X_{N-1}] \Big) -\lambda^2
\frac{\ell_N}{1-\ell_N}.
\end{eqnarray*}
At time $N-2$ the expression which has to be minimized is the same as at time $N-1,$ 
when we replace $\lambda$ by $\frac{\lambda S^0_{N-1}}{d_{N-1}S^0_N}$ in \eqref{eq:onestep}. Indeed, observe that 
\begin{eqnarray*} 
J_{N-2}(\nu)&=&
\inf_{\phi\in F}  J_{N-1}(\hat T_{N-2}(\nu,\phi))\\
&=& d_{N-1}\Big(\frac{S_N^0}{S_{N-1}^0}\Big)^2
\inf_{\phi\in F}
\Big(\E_{N-1,\hat T_{N-2}(\nu,\phi)}[X_{N-1}^2] -(\E_{N-1,\hat T_{N-2}(\nu,\phi)} [X_{N-1}])^2+\\
 && \hspace*{1cm}-\frac{2\lambda}{d_{N-1}S_N^0/S_{N-1}^0}
  \E_{N-1,\hat T_{N-2}(\nu,\phi)} [X_{N-1}] \Big) -\lambda^2
\frac{\ell_N}{1-\ell_N}. 
\end{eqnarray*} 
We obtain
\begin{eqnarray*}
  \hat \phi^*_{N-2}(\nu)(x)
  &=& \left(\frac{\lambda S^0_{N-2}}{d_{N-2}S^0_N} +\int x'\nu(dx')  -x\right)C_{N-1}^{-1}\E R_{N-1}
  \end{eqnarray*}
and 
\begin{eqnarray*}
J_{N-2}(\nu) &=&d_{N-2}\Big(\frac{S_N^0}{S_{N-2}^0}\Big)^2 \Big(\E_{N-2,\nu}[X_{N-2}^2] -(\E_{N-2,\nu} [X_{N-2}])^2
\\
&&-\frac{2\lambda}{d_{N-2}S_N^0/S_{N-2}^0} \E_{N-2,\nu} [X_{N-2}] \Big)
-\lambda^2\left(\frac {\ell_N}{1-\ell_N}+\frac{\ell_{N-1}}{(1-\ell_{N-1})(1-\ell_N)}\right).
\end{eqnarray*}
Continuing this procedure we finally get
$$  \hat \phi^*_{0}(\nu)(x)= \left(\frac{\lambda}{d_0 S^0_N} +\int x'\nu(dx')  -x\right)C_{1}^{-1}\E R_{1}$$
and 
\begin{eqnarray*}
J_{0}(\nu) &=&  d_{0}\Big(\frac{S_N^0}{S_{0}^0}\Big)^2 \left(\E_{\nu}[X_{0}^2] -(\E_{\nu} [X_{0}])^2
-\frac{2\lambda}{d_{0}S_N^0} \E_{\nu} [X_{0}] \right)+\\&&-\lambda^2\left(\frac {\ell_N}{1-\ell_N}
+\frac{\ell_{N-1}}{(1-\ell_N)(1-\ell_{N-1})}+\ldots+\frac{\ell_{1}}{(1-\ell_N)\cdots(1-\ell_1)}\right)\\
&=&d_{0}(S_N^0)^2 Var_{\nu}[X_{0}] -2\lambda S_N^0 \E_{\nu} [X_{0}] -\lambda^2\left(\frac 1{d_0}-1\right).
\end{eqnarray*}
Note that the last equality follows from
\begin{eqnarray*}
    \sum_{k=1}^N \frac{\ell_k}{(1-\ell_N)\ldots (1-\ell_k)} &=&
   \sum_{k=1}^N \frac{\ell_k}{d_{k-1}} = \sum_{k=1}^N \frac{d_k-d_{k-1}}{d_{k-1}d_k}\\
    &=&  \sum_{k=1}^N \Big(\frac{1}{d_{k-1}}-\frac{1}{d_k}\Big)=\frac{1}{d_0}-\frac{1}{d_N}=\frac{1}{d_0}-1.
\end{eqnarray*}
Hence, we have defined the optimal policy  
$\hat \pi^*=(\hat\phi^*_0,\ldots,\hat\phi^*_{N-1}).$
Now, if $\mu_0$ is an initial distribution, then $J_{0}(\mu_0)$ is optimal value for $\hat P(\lambda).$ 
\end{proof}

The policy $\hat \pi^*$ obtained in Theorem \ref{theo:main_mv} is {\it time-consistent}  in the {\it population version} model 
due to Definition \ref{tc}.   This is because  the same functions $J_n$ and $J_{n+1}$
appear on the left-hand  and  right-hand side in  \eqref{eq:recursion}. 
This immediately implies that the Bellman optimality principle 
holds and backward induction in this case produces an optimal policy. 

Assume that $\mu_0\in\hat E$ is given. Then,
making use of $\hat \pi^*$ according to (\ref{eq:sequence})   we may define
a policy  $\pi^{*}=(\phi^{*}_0,\ldots,\phi^{*}_{N-1})$ for the original mean-variance model and the measures 
$\mu_1,\ldots,\mu_N.$   More precisely,
$\phi^{*}_n=\hat\phi^*_n(\mu_n)$ where $\mu_n$ is the distribution of the $n$-th state and  
\begin{equation}
\label{Eq:optphi}
\phi^{*}_{n}(x)= 
\left(\frac{\lambda  S^0_n}{d_n S^0_N }+\int x'\mu_n(dx')  -x\right)C_{n+1}^{-1}\E R_{n+1}, \quad x\in E,
\end{equation}
$n=0,\ldots, N-1.$ 

\begin{corollary} If $\mu_0=\delta_{x_0},$ then the value 
 $J_0(\delta_{x_0})$
is the value of $P(\lambda)$ and  $J_0(\delta_{x_0})=V^{o}(x_0).$ 
The  policy $\pi^{*}=(\phi^{*}_0,\ldots,\phi^{*}_{N-1})$ with $\phi^{*}_n$ 
defined in  \eqref{Eq:optphi} for $n=0,\ldots, N-1$ is optimal.
These decision rules are history-dependent, however depend at time $n$ on the history only through $\mu_n.$ 
\end{corollary}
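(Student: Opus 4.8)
The plan is to read the value at $\mu_0=\delta_{x_0}$ directly off Theorem \ref{theo:main_mv}, match it against Proposition \ref{sol1}, and then transport optimality from the population version back to $P(\lambda)$ through Lemma \ref{lemp2}. First I would substitute $\mu_0=\delta_{x_0}$ into the value formula of Theorem \ref{theo:main_mv}(i). Since $\int x^2\,\delta_{x_0}(dx)=x_0^2$ and $\int x\,\delta_{x_0}(dx)=x_0$, the bracketed variance term $\int x^2\mu_0(dx)-\big(\int x\mu_0(dx)\big)^2$ vanishes, leaving
$$J_0(\delta_{x_0}) = -2\lambda S_N^0 x_0-\lambda^2\Big(\tfrac{1}{d_0}-1\Big)=\lambda^2\Big(1-\tfrac{1}{d_0}\Big)-2\lambda x_0 S_N^0=V^o(x_0),$$
which is exactly the value in Proposition \ref{sol1}. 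To conclude that this is also the value of $P(\lambda)$, I would note that with $X_0=x_0$ deterministic the objective $Var_{\delta_{x_0}}^\pi[X_N]-2\lambda\E_{\delta_{x_0}}^\pi[X_N]$ of $\hat P(\lambda)$ coincides literally with that of $P(\lambda)$ and the feasible set $\Pi$ is the same; hence the value of $P(\lambda)$ equals $J_0(\delta_{x_0})=V^o(x_0)$.

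Next I would establish optimality of $\pi^*$. Applying Definition \ref{def1} to the optimal population policy $\hat\pi^*$ of Theorem \ref{theo:main_mv}(ii) with $\mu_0=\delta_{x_0}$ produces the measures $\mu_1,\ldots,\mu_N$ and the induced rules $\phi^*_n=\hat\phi^*_n(\mu_n)$, which are precisely those in \eqref{Eq:optphi}. By Lemma \ref{lemp2} the objective value of $\pi^*$ in $P(\lambda)$ equals $J_{0,\hat\pi^*}(\delta_{x_0})$. A short backward induction, using that $\hat\phi^*_n$ attains the infimum in \eqref{eq:recursion}, gives $J_{n,\hat\pi^*}=J_n$ for every $n$, so $J_{0,\hat\pi^*}(\delta_{x_0})=J_0(\delta_{x_0})=V^o(x_0)$. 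Since $V^o(x_0)$ is the optimal value of $P(\lambda)$ by Proposition \ref{sol1}, the feasible policy $\pi^*$ attains that value and is therefore optimal.

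Finally I would address the structure of the rules: inspecting \eqref{Eq:optphi}, the only ingredient entering $\phi^*_n$ beyond the current wealth $x$ is the mean $\int x'\mu_n(dx')=\E_{x_0}^{\pi^*}[X_n]$, and since $\mu_n$ is determined by $\mu_0$ and the previously used rules, $\phi^*_n$ depends on the history only through $\mu_n$ (for $\mu_0=\delta_{x_0}$ this collapses to a dependence on $x_0$ and $x$, so $\pi^*$ is semi-Markov, exactly like $\pi^o$). The computations here are routine; the only point requiring genuine care is conceptual — the population recursion optimizes, via Definition \ref{def1}, only over the induced Markovian policies, yet the value match $J_0(\delta_{x_0})=V^o(x_0)$ with Proposition \ref{sol1} certifies that nothing is lost relative to the full class $\Pi$, which is what upgrades $\pi^*$ from ``optimal in the population model'' to globally optimal for $P(\lambda)$.
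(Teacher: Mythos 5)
Your proposal is correct and follows essentially the same route as the paper: substitute $\mu_0=\delta_{x_0}$ into the value formula of Theorem \ref{theo:main_mv}, invoke Lemma \ref{lemp2} and Definition \ref{def1} to transport the policy back to the original model, and use Proposition \ref{sol1} to certify that restricting to the induced (Markovian) policies loses nothing relative to $\Pi$. The only cosmetic difference is that the paper closes the loop by showing the induced decision rules \eqref{Eq:optphi++} coincide with $\pi^o$ itself, whereas you close it by matching the values $J_0(\delta_{x_0})=V^o(x_0)$; both hinge on the same external input and are equally valid.
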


Furthermore, we observe that the expression $\int x' \mu_n(dx')$  in \eqref{Eq:optphi}
is exactly $\E_{\mu_0}^{\pi^{*}}[X_n].$ Thus, we can compute it as follows:
\begin{align*}
    \E_{\mu_0}^{\pi^{*}}[X_1] &= (1+i_1) \E_{\mu_{0}} \Big[ X_{0}+\phi_0^{*}(X_{0}) \cdot R_1\Big]
     = (1+i_1) \Big( \E_{\mu_{0}} [ X_{0}] +\frac{\lambda}{d_0 S_N^0} \ell_1\Big)\\
    &= S_1^0\Big( \E_{\mu_0} [X_0] + \frac{\lambda}{S_N^0} \frac{\ell_1}{d_0}\Big).
\end{align*}
For arbitrary time $n=1,\ldots, N,$ we get
$$  \E_{\mu_0}^{\pi^{*}}[X_n] = S_n^0 \Big(\E_{\mu_0} [X_0] 
+ \frac{\lambda}{S_N^0} \sum_{k=1}^n \frac{\ell_k}{d_{k-1}}\Big).$$
Next we  insert this in  \eqref{Eq:optphi} to obtain for $n=0,\ldots,N-1$:
\begin{eqnarray}\nonumber
\phi^{*}_n(x) &=&  \left(S_n^0\E_{\mu_0} [X_0] 
+ \lambda  \frac{  S^0_n}{S^0_N} \Big( \sum_{k=1}^n \frac{\ell_k}{d_{k-1}}
+\frac{1}{d_n}\Big) -x\right)C_{n+1}^{-1}\E R_{n+1}\\ \label{Eq:optphi++}
&=&  \left( \Big(\E_{\mu_0} [X_0] +   \frac{\lambda}{S^0_N d_0}\Big) S_n^0  -x\right)C_{n+1}^{-1}\E R_{n+1},\quad x\in E,
\end{eqnarray}
since for all $n=1,\ldots,N-1$: 
$$\sum_{k=1}^n \frac{\ell_k}{d_{k-1}}+\frac{1}{d_n}
=\sum_{k=1}^n \Big(\frac{1}{d_{k-1}}-\frac{1}{d_k} \Big)+\frac{1}{d_n}= \frac{1}{d_0}.$$
Thus,  if we consider $\mu_0=\delta_0,$ then the optimal policy in \eqref{Eq:optphi++} coincides 
with the optimal policy $\pi^{o}$ in Proposition \ref{sol1}. Moreover, 
 the optimal values are the same, i.e., 
the use of further information does not improve the value. Comparing the two approaches we may point out 
 two advantages for the latter one. First, 
we were  able to solve  our problem $P(\lambda)$ directly without using auxiliary problems.
We need only forward  recursion  formulae in Definition \ref{def1}. Second, in the {\it population version} model, 
this policy can be viewed as {\it time-consistent,} i.e., 
it is optimal and satisfies the Bellman optimality principle. 

Compared to the second solution 
with the equilibrium strategy $\pi^{e}$, it is already clear by 
construction  that the value obtained in the {\it population 
version} MDP cannot be worse. This is because the policy in  \eqref{Eq:optphi}
can be seen as a  history-dependent policy  in the original formulation.
This policy  depends on the process history only via the probability 
distribution, i.e., $\mu_n$ is a function of 
$(X_0,A_0,\ldots,A_{n-1}).$ The equilibrium strategy, on the 
other hand, is completely deterministic.




\section{General non-additive Markov decision processes }\label{sec:general}
In this section, we consider a generalization of the mean-variance problem.  
We show that  a general time-inconsistent  problem 
can be solved yielding a time-consistent policy in the {\em population version } MDP. 
Suppose we are given an original  MDP with finite time horizon $N$ and with the following data:
\begin{itemize}
\item $(E,\mathcal{B}(E))$  is a Borel  state space; the states are denoted by $x\in E,$
\item $(A,\mathcal{B}(A))$ is a Borel action space; the actions are denoted by $a\in A$,
\item $(\mathcal{R},\mathcal{B}(\mathcal{R})) $ is some Borel space; the random 
variables $R_1,\ldots ,R_N$ are independent with values in $\mathcal{R}$ defined on the joint probability space
 $(\Omega,\mathcal{F},\PP)$,
\item $T_n(x,a,r)$ is a measurable transition function at time $n=0,\ldots ,N-1$, 
i.e., $T_n : E\times A\times \mathcal{R}\to E$ and $$X_{n+1} = T_n(X_n,A_n, R_{n+1})$$
is the new state at time $n+1,$ given the state at time $n$ is $X_n$ and action $A_n$ is taken; then
$$ Q_n(B|x,a)= \PP\big(T_n(x,a,R_{n+1})\in B\big), \quad B \in\mathcal{B}(E)$$
is the transition kernel,
\item $c_n(x,a)$ is a measurable cost 
function  $c_n:E\times A\to\R,$ which gives  
the cost at time $n=0,\ldots,N-1$ in state $x$ and action $a$,
\item $c_N(x)$ is the terminal cost in state $x$, i.e.,\ $c_N: E\to \R$,
\item $h:E\to\R $ and $G:\R\to\R$  are measurable functions providing the non-additivity. We set $G(\infty):=\infty.$
\end{itemize}
In what follows, let $\Pi$ be the set of all deterministic history-dependent policies $\pi=(\pi_0,\ldots,\pi_{N-1}),$  i.e., if 
$ h_n=(x_0,a_0,\ldots ,x_n)$ is the history up to time $n$, then $\phi_n(h_n)$ gives the action at time $n$. By 
 $$ F := \{ \phi: E \to A \ | \ \phi \mbox{ is measurable}\} $$ we denote the set of Markovian decision rules. 
 We are now interested in minimizing the following $N$-stage cost over all history-dependent policies:
 \begin{equation}\label{form:general}
 \mathbb{E}_{x_0}^\pi\Big[ \sum_{k=0}^{N-1} c_k(X_k,A_k)+ c_N(X_N)\Big]+
G(\mathbb{E}_{x_0}^\pi[h(X_N)])\to \inf_{\pi \in\Pi} \end{equation}
If $G$ is linear then this problem is a 
standard MDP and we can solve it with 
established techniques (see Remark \ref{rem:pe}). 
In what follows, we make no specific 
assumption about $G.$ In order to have a 
well-defined problem, we assume that a lower bounding function exists, i.e., 
there exists a measurable mapping $b:E\to \R$ and constants $\bar c_n,\bar c_h,\bar c_N,\alpha_b>0$ such that
\begin{itemize}
    \item[(i)] $c_n^-(x,a) \le \bar c_n b(x)$ for all $(x,a)\in E\times A, n=0,\ldots ,N-1$,
    \item[(ii)] $c_N^-(x)\le \bar c_N b(x)$ for all $x\in E$,
    \item[(iii)] $h^-(x)\le \bar c_h b(x)$ for all $x\in E$,
    \item[(iv)] $\int b(x') Q^\phi(dx'|x)\le \alpha_b b(x)$ for all $x\in E$ and decision rules $\phi$,
\end{itemize}
where $a^- = -\min\{0,a\},$ $a\in\R.$ 
This implies that all expectations are well-defined. 
We solve the problem again as a {\it population version} MDP: 
\begin{itemize}
\item $\hat E := \PP(E)$ is the state space, where $\mu\in \hat E$  is a probability measure on $E$,
\item $ \hat A :=F$ is the action space, where $\phi\in \hat A$ is a decision rule for the original MDP,
\item $\hat T_n(\mu, \phi)=\mu'$ is the transition function, where 
\begin{align*}
    \mu'(B) &= \int \PP\big(T_n(x,\phi(x),R_{n+1}) \in B\big)\mu(dx) \\ &
= \int Q_n(B|x,\phi(x)) \mu(dx)= \int Q_n^\phi(B|x) \mu(dx)
\end{align*}  
and $Q_n^\phi(B|x)=Q_n(B|x,\phi(x)),$ for  $B\in\mathcal{B}(E)$ and $x\in E$,
\item $\hat c_n(\mu,\phi)= \int c_n(x,\phi(x))\mu(dx) $ is the one-stage cost,
\item $\hat c_N(\mu) = \int c_N(x) \mu(dx) + G(\int h(x)\mu(dx))$ is the terminal cost.
\end{itemize}
This MDP is now a deterministic dynamic program. 
We know that we can restrict the optimization to Markovian policies $\hat\pi=(\hat\phi_0,\ldots ,\hat\phi_{N-1}),$ where
each $\hat\phi_n\in\hat F$ for $n=0,\ldots,N-1,$ and 
$\hat F=\{\hat\phi:\hat E\to\hat A\}.$  We again do not require any measurability conditions of the mappings in $\hat F.$

\begin{definition} \label{def2}
Given a policy
 $\hat \pi = (\hat\phi_0,\ldots,\hat\phi_{N-1})$ we obtain the following state-action sequence:
\begin{align}
\nonumber
 \phi_0 &:= \hat\phi_0(\mu_0),\quad 
 \mu_1 := \hat T_0(\mu_0,\phi_0),\\  \nonumber
   \phi_1 &:= \hat\phi_1(\mu_1),\quad
 \mu_2 := \hat T_1(\mu_1,\phi_1),\\  \nonumber
 &\vdots\\ \label{eq:sequence2}
 \phi_{N-1} &:= \hat\phi_{N-1}(\mu_{N-1}),\quad \mu_{N} := \hat T_{N-1}(\mu_{N-1},\phi_{N-1}).
\end{align} 
The sequence of actions defines a policy 
$\pi=(\phi_0,\phi_1,\ldots,\phi_{N-1})$ in 
the original MDP.
\end{definition}

\begin{lemma} \label{lemp3}
Under policy $\hat\pi=(\hat\phi_0,\ldots ,\hat\phi_{N-1})$, the distribution of $X_k$ is equal to $\mu_k$,
where $\mu_k$ are defined in (\ref{eq:sequence2}) for $k=0,1,\ldots, N.$
\end{lemma}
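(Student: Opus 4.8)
The plan is to prove the statement by induction on $k$, following exactly the pattern of Lemma \ref{lemp1}; only the transition mechanism is more general here. The base case $k=0$ holds by definition, since $X_0$ is distributed according to the prescribed initial law $\mu_0$. The whole argument is essentially a bookkeeping exercise that matches the one-step law of the original MDP against the deterministic transition $\hat T_n$ of the population version.

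For the inductive step, suppose $X_n \sim \mu_n$ for some $n \in \{0,\ldots,N-1\}$. The crucial observation is that, because each $\mu_k$ in \eqref{eq:sequence2} is deterministic, the decision rule $\phi_n = \hat\phi_n(\mu_n)$ is a fixed (non-random) element of $F$. Hence, under the induced policy $\pi$, the action taken at time $n$ is $A_n = \phi_n(X_n)$, and the new state is $X_{n+1} = T_n(X_n,\phi_n(X_n),R_{n+1})$.

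I would then compute the law of $X_{n+1}$ directly. For $B \in \mathcal{B}(E)$,
\begin{align*}
\PP(X_{n+1} \in B) &= \PP\big(T_n(X_n,\phi_n(X_n),R_{n+1}) \in B\big) \\
&= \int \PP\big(T_n(x,\phi_n(x),R_{n+1}) \in B\big)\,\mu_n(dx) \\
&= \int Q_n^{\phi_n}(B|x)\,\mu_n(dx) = \hat T_n(\mu_n,\phi_n)(B) = \mu_{n+1}(B),
\end{align*}
where the second equality uses that $R_{n+1}$ is independent of $X_n$, the third is the definition of $Q_n^\phi$, and the last two are the definitions of $\hat T_n$ and of $\mu_{n+1}$ in \eqref{eq:sequence2}. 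This shows $X_{n+1} \sim \mu_{n+1}$ and closes the induction.

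The only point requiring care, and the main (though mild) obstacle, is justifying the passage from the first to the second line, namely the independence of $R_{n+1}$ from $X_n$ together with the disintegration of the probability against $\mu_n$. This follows because $X_n$ is a measurable function of $(X_0,R_1,\ldots,R_n)$, the decision rules $\phi_0,\ldots,\phi_{n-1}$ being deterministic, while $R_{n+1}$ is independent of $R_1,\ldots,R_n$ and of $X_0$ by the standing assumptions of Section \ref{sec:general}; an application of Fubini's theorem then yields the stated integral representation, the lower bounding condition (iv) guaranteeing that all the expressions involved are well defined.
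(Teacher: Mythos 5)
Your proof is correct and follows essentially the same route as the paper, which simply performs the induction of Lemma \ref{lemp1} with the obvious changes: the base case is the definition of $\mu_0$, and the inductive step identifies the law of $X_{n+1}=T_n(X_n,\phi_n(X_n),R_{n+1})$ with $\hat T_n(\mu_n,\phi_n)=\mu_{n+1}$ using the independence of $R_{n+1}$ from $X_n$. Your only superfluous remark is the appeal to the lower bounding condition (iv), which is needed for the cost functionals but not for this purely distributional statement.
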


\begin{proof}
 The proof goes by induction as in Lemma \ref{lemp1} with the obvious changes.
\end{proof}

The value functions for a fixed policy $\hat\pi=(\hat\phi_0,\ldots ,\hat\phi_{N-1})$ are given by
\begin{eqnarray*}
 J_{N,\hat\pi}(\mu) &:=& \hat c_N (\mu),\\
 \mbox{and}\quad J_{n,\hat\pi}(\mu) &:=& 
 \hat c_{n}(\mu,\hat\phi_{n}(\mu)) + J_{n+1,\hat\pi}(\hat T_{n}(\mu,\hat\phi_{n}(\mu))),\quad n=0,\ldots,N-1.
\end{eqnarray*}

\begin{lemma} \label{lemp4}
Under policy $\hat \pi = (\hat\phi_0,\ldots,\hat\phi_{N-1})$, 
$$J_{0,\hat\pi}(\mu_0) =
 \mathbb{E}_{\mu_0}^\pi\Big[ \sum_{k=0}^{N-1} c_k(X_k,A_k)+ c_N(X_N)\Big]
 +G(\mathbb{E}_{\mu_0}^\pi[h(X_N)]),$$
where $\pi$ on the right-hand side is defined as in \eqref{eq:sequence2}.
\end{lemma}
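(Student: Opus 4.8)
The plan is to mimic the argument of Lemma \ref{lemp2}, but now carrying the running costs: I would unfold the backward recursion defining $J_{0,\hat\pi}$ along the forward sequence $(\mu_k,\phi_k)$ of Definition \ref{def2}, obtain a telescoping identity, and then convert the resulting integrals into expectations via Lemma \ref{lemp3}. Concretely, since Definition \ref{def2} gives $\phi_n=\hat\phi_n(\mu_n)$ and $\mu_{n+1}=\hat T_n(\mu_n,\phi_n)$, the defining relation $J_{n,\hat\pi}(\mu)=\hat c_n(\mu,\hat\phi_n(\mu))+J_{n+1,\hat\pi}(\hat T_n(\mu,\hat\phi_n(\mu)))$ evaluated at $\mu=\mu_n$ reads $J_{n,\hat\pi}(\mu_n)=\hat c_n(\mu_n,\phi_n)+J_{n+1,\hat\pi}(\mu_{n+1})$. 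Iterating from $n=0$ down to $n=N$, where $J_{N,\hat\pi}(\mu_N)=\hat c_N(\mu_N)$, this collapses to
$$ J_{0,\hat\pi}(\mu_0) = \sum_{k=0}^{N-1}\hat c_k(\mu_k,\phi_k) + \hat c_N(\mu_N). $$

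Next I would substitute the definitions of the stage and terminal costs, namely $\hat c_k(\mu_k,\phi_k)=\int c_k(x,\phi_k(x))\mu_k(dx)$ and $\hat c_N(\mu_N)=\int c_N(x)\mu_N(dx)+G\big(\int h(x)\mu_N(dx)\big)$. Invoking Lemma \ref{lemp3}, under the policy $\pi$ built in Definition \ref{def2} the state $X_k$ has distribution $\mu_k$ and the action taken is $A_k=\phi_k(X_k)$; hence each integral is an expectation, i.e.\ $\int c_k(x,\phi_k(x))\mu_k(dx)=\mathbb{E}_{\mu_0}^\pi[c_k(X_k,A_k)]$, likewise $\int c_N\,d\mu_N=\mathbb{E}_{\mu_0}^\pi[c_N(X_N)]$ and $\int h\,d\mu_N=\mathbb{E}_{\mu_0}^\pi[h(X_N)]$. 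Collecting terms by linearity of expectation yields exactly the claimed identity. The nonlinear term causes no trouble because $G$ is applied to the \emph{scalar} $\mathbb{E}_{\mu_0}^\pi[h(X_N)]$, which is precisely the argument already present inside $\hat c_N(\mu_N)$, so it passes through the translation unchanged and is simply appended outside the sum.

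The only delicate point is well-definedness: the telescoping and the final linear rearrangement are legitimate only if every integral above is finite. This is guaranteed by the lower bounding function $b$: conditions (i)--(iii) dominate the negative parts $c_k^-,c_N^-,h^-$ by multiples of $b$, while condition (iv) propagates the bound $\int b\,d\mu_k \le \alpha_b^{\,k}\int b\,d\mu_0$ along the trajectory, keeping each $b$-integral finite and hence each expectation well defined in $(-\infty,\infty]$, as already noted after the bounding-function assumptions. I expect this integrability bookkeeping, rather than the telescoping itself, to be the only real obstacle; the algebraic content of the lemma is just the unfolding of the recursion combined with Lemma \ref{lemp3}.
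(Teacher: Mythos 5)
Your argument is exactly the paper's: unfold the recursion $J_{n,\hat\pi}(\mu_n)=\hat c_n(\mu_n,\phi_n)+J_{n+1,\hat\pi}(\mu_{n+1})$ along the forward sequence of Definition \ref{def2}, identify each $\hat c_k(\mu_k,\phi_k)$ and $\hat c_N(\mu_N)$ as expectations via Lemma \ref{lemp3}, and collect terms, with $G$ passing through unchanged since it acts on the scalar $\mathbb{E}_{\mu_0}^\pi[h(X_N)]$. Your closing remark on integrability is a sound extra precaution that the paper handles once and for all via the lower bounding function assumptions rather than inside this proof.
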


\begin{proof}
We obtain
\begin{eqnarray*}
J_{0,\hat\pi}(\mu_0) &=& \hat c_0(\mu_0,\hat\phi_0(\mu_0)) + 
J_{1,\hat\pi}(\hat T_0(\mu_0,\hat\phi_{0}(\mu_0)))= \hat c_0(\mu_0,\phi_0) + J_{1,\hat\pi}(\hat T_0(\mu_0,\phi_{0}))\\
&=& \EE_{\mu_0}[c_0(X_0,\phi_0(X_0))]+  J_{1,\hat\pi}(\mu_1)\\
&=&  \EE_{\mu_0}[c_0(X_0,\phi_0(X_0))]+  \EE_{1,\mu_1}[c_1(X_1,\phi_1(X_1))] + J_{2,\hat\pi}(\hat T_1(\mu_1,\phi_{1}))\\
  &=&  \EE_{\mu_0}[c_0(X_0,\phi_0(X_0))]+ \ldots +  \EE_{N-1,\mu_{N-1}}[c_{N-1}(X_{N-1},\phi_{N-1}(X_{N-1}))]+ \hat c_N(\mu_N)\\
  &=&  \mathbb{E}_{\mu_0}^\pi\Big[ \sum_{k=0}^{N-1} c_k(X_k,\phi_k(X_k))+ c_N(X_N)\Big]+G(\mathbb{E}_{\mu_0}^\pi[h(X_N)]),
\end{eqnarray*}
where we have used   Lemma \ref{lemp3} and \eqref{eq:sequence2}.
\end{proof}

Finally, we define for $n=1,\ldots,N$:
\begin{align}\nonumber
 J_{N}(\mu) &:= \hat c_N(\mu),\\ \label{eq:recursion2}
 J_{n}(\mu) &:= \inf_{\hat\phi\in \hat F}\left( \hat c_{n}(\mu,\hat\phi(\mu)) + J_{n+1}(\hat T_{n}(\mu,\hat\phi(\mu)))\right).
\end{align}
If the  minimum in \eqref{eq:recursion2} is
attained, we denote it by $ \hat\phi_{n}^*(\mu).$ Note that the recursion in  \eqref{eq:recursion2} 
is exactly the standard Bellman equation, 
because on the right-hand we have the sum of the current cost and the value function,
whereas on the left hand side we have the same value function. This means that the optimal policy in the {\it population version}
 MDP (if it exists) is {\it  time-consistent} according to Definition \ref{tc}.
Thus, we obtain as in the previous section:

\begin{theorem}  \label{thm:m} Assume that the initial distribution is $\mu_0=\delta_{x_0}.$
The value $J_0(\mu_0)$ 
obtained with the recursion in \eqref{eq:recursion2} is 
the value of problem \eqref{form:general} and the optimal policy (if it exists) 
 $\hat \pi^*=(\hat\phi^*_0,\ldots ,\hat\phi^*_{N-1})$ is time-consistent in the population version MDP.
The policy $\pi^*=(\phi^*_0,\ldots,\phi^*_{N-1})$  defined in \eqref{eq:sequence2}, i.e., 
$\phi^*_k=\hat\phi^*_k(\mu_k)$ for $k=0,\ldots,N-1$ is optimal in the original formulation. 
These decision rules are history-dependent, however depend at time $n$ 
on the history only through $\mu_n$.
\end{theorem}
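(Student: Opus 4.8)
The plan is to regard the population version MDP as an ordinary \emph{deterministic} dynamic program and to run the textbook verification (backward induction) argument on it, using Lemmas \ref{lemp3} and \ref{lemp4} to translate everything back to problem \eqref{form:general}. Throughout I write $V(\pi)$ for the objective in \eqref{form:general}. The three assertions then correspond to: the recursion \eqref{eq:recursion2} computes the optimal value over population policies; this value coincides with the value of \eqref{form:general}; and the decision rule selected at each stage depends on the past only through the current marginal $\mu_n$.

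First I would establish, by backward induction on $n$, the identity
$$ J_n(\mu)=\inf_{(\hat\phi_n,\ldots,\hat\phi_{N-1})} J_{n,\hat\pi}(\mu),\qquad \mu\in\hat E, $$
the infimum running over all tails of population policies. The base case $n=N$ is the definition $J_N=\hat c_N=J_{N,\hat\pi}$. For the step, write $J_{n,\hat\pi}(\mu)=\hat c_n(\mu,\hat\phi_n(\mu))+J_{n+1,\hat\pi}(\hat T_n(\mu,\hat\phi_n(\mu)))$, bound the tail below by $J_{n+1}$ via the induction hypothesis to get $J_{n,\hat\pi}(\mu)\ge J_n(\mu)$, and recover the reverse inequality by choosing $\hat\phi_n$ nearly optimal in \eqref{eq:recursion2} together with a nearly optimal tail. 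Because the transitions $\hat T_n$ are deterministic this is the elementary verification theorem, and since we imposed no measurability requirement on $\hat F$ there are no selection obstructions. When the infimum in \eqref{eq:recursion2} is attained at each stage, the resulting $\hat\pi^*$ attains $J_0(\mu_0)$; \emph{time-consistency} is then immediate, since the \emph{same} function $J_{n+1}$ that appears on the right of \eqref{eq:recursion2} is the one produced at the next stage, so any tail of $\hat\pi^*$ remains optimal for the later subproblem, exactly as required by Definition \ref{tc}.

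Next I would identify $J_0(\mu_0)$ with the value of \eqref{form:general}. By Lemma \ref{lemp4} every population policy induces through Definition \ref{def2} a policy $\pi(\hat\pi)\in\Pi$ with $J_{0,\hat\pi}(\mu_0)=V(\pi(\hat\pi))$, so $J_0(\mu_0)=\inf_{\hat\pi}J_{0,\hat\pi}(\mu_0)\ge \inf_{\pi\in\Pi}V(\pi)$. The reverse inequality is the substantive part. Since $\mu_0=\delta_{x_0}$ is a point mass, the marginals in \eqref{eq:sequence2} are deterministic and the population Markov policies realize exactly the deterministic Markov policies $(\phi_0,\ldots,\phi_{N-1})\in F^N$; thus one must show these are optimal for \eqref{form:general} among all history-dependent policies. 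I would argue this by noting that $V(\pi)$ depends on $\pi$ only through the flow of state-action marginals — the additive part linearly, and $G(\E^\pi_{x_0}[h(X_N)])$ only through $\mathcal L^\pi(X_N)$ — and then invoking the reduction of history-dependent to Markov policies for such marginal-flow objectives, made rigorous under the lower-bounding conditions (i)--(iv), which keep every expectation and every step of the recursion finite. Combining the two inequalities gives $\inf_{\pi\in\Pi}V(\pi)=J_0(\mu_0)$, and the induced $\pi^*$ with $\phi^*_k=\hat\phi^*_k(\mu_k)$ is optimal.

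The step I expect to be the main obstacle is precisely this last reduction. For a purely additive criterion, optimality of deterministic Markov policies is classical, but the nonlinear term $G(\E^\pi_{x_0}[h(X_N)])$ makes the population-level Bellman operator nonlinear in $\mu$, so passing from randomized or history-dependent policies down to the deterministic Markov ones generated by the population MDP does not follow verbatim and is where the real care is needed — one must control it through the dependence of $V$ on the single terminal marginal $\mu_N$ (for the $G$-part) and on the state-action marginal flow (for the additive part). Once this is granted, the remaining claims follow mechanically: the backward induction gives optimality and time-consistency within the population MDP, Lemmas \ref{lemp3}–\ref{lemp4} transfer value and optimality to \eqref{form:general}, and \eqref{eq:sequence2} shows that $\phi^*_n$ depends on the past only through $\mu_n$, which is itself a function of $(X_0,A_0,\ldots,A_{n-1})$.
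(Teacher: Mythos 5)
Your backward-induction verification on the deterministic population dynamic program, and the transfer of values through Lemmas \ref{lemp3} and \ref{lemp4}, reproduce exactly the paper's (largely implicit) argument: the paper offers no separate proof of Theorem \ref{thm:m} beyond the observation that \eqref{eq:recursion2} is a genuine Bellman equation and the phrase ``as in the previous section''. Up to that point your write-up is fine and, if anything, more explicit than the original.

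The difficulty is the step you yourself single out as the main obstacle, the inequality $J_0(\delta_{x_0})\le \inf_{\pi\in\Pi}V(\pi)$, and the route you sketch for it does not close the gap. The Derman--Strauch type reduction matches the state-action marginal flow of a history-dependent policy by a \emph{randomized} Markov policy, whereas the population MDP has $\hat A=F$, so the recursion \eqref{eq:recursion2} only optimizes over \emph{deterministic} Markov policies $(\phi_0,\ldots,\phi_{N-1})\in F^N$; for nonlinear $G$ these two classes can have strictly different values, so this is not a removable technicality. Concretely, take $N=3$, $A=\{0,1\}$, $x_0=0$, with $T_0(x,a,r)=r$ and $R_1$ uniform on $\{1,2\}$, $T_1(x,a,r)=0$, $T_2(x,a,r)=a$, all $c_n\equiv 0$, $h(x)=x$ and $G(y)=(y-\tfrac12)^2$. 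Every deterministic Markov policy has $A_2=\phi_2(0)$ constant, and the recursion \eqref{eq:recursion2} gives $J_0(\delta_0)=\tfrac14$, while the history-dependent policy $\pi_2(h_2)=1_{[X_1=2]}$ yields $\E^{\pi}_{0}[h(X_3)]=\tfrac12$ and total cost $0$. Hence the identity ``$J_0(\mu_0)$ is the value of problem \eqref{form:general}'' fails in the stated generality; it becomes correct only if $\Pi$ is read as $F^N$, or if $\hat A$ is enlarged to randomized decision rules (the extension the paper itself mentions in Section \ref{sec:meanfield}), or under structural hypotheses verified case by case, as in the mean-variance and LQ applications where optimality of a (semi-)Markov policy over all of $\Pi$ is established by separate arguments. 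You located the right pressure point --- something the paper's own exposition glosses over --- but the marginal-flow argument you propose there cannot be completed without one of these additional inputs.
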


Theorem  \ref{thm:m}  means that the optimal policy in the original formulation can be computed by 
a backward-forward algorithm. First, we use \eqref{eq:recursion2} to determine the value functions $J_n$ 
and the optimal policy $\hat \pi^*.$ Then, we  use \eqref{eq:sequence2} to find 
the sequence of measures $\mu_0,\ldots,\mu_{N-1}$ and  the policy $\pi^*$.

\begin{remark}\label{rem:pe}
In case $G(x)=x$ (w.l.o.g.\ we can then set $c_N\equiv 0$) and in case 
minimizers exist, our solution method boils down to the classical Bellman equations of MDP theory. 
Indeed, in this case it can be shown by induction that $J_n(\mu)= \int V_n(x) \mu(dx), n=0,1,\ldots,N$
 for some measurable functions $V_n.$ The proof is as follows. First, for $N$ we have by definition 
 $J_N(\mu)= \int h(x)\mu(dx)$. Suppose $J_{n+1}(\mu)= \int V_{n+1}(x) \mu(dx).$ Then
\begin{align*}
J_n(\mu) &= \inf_{\phi\in F} \left(\int c_n(x,\phi(x)) \mu(dx)+ \int \EE \big[V_{n+1}(T(x,\phi(x),R_{n+1}))\big] \mu(dx)\right)\\
&=  \inf_{\phi\in F} \left(\int c_n(x,\phi(x)) +  \EE \big[V_{n+1}(T(x,\phi(x),R_{n+1}))\big] \mu(dx)\right)\\
&= \int c_n(x,\phi^*(x)) +  \EE \big[ V_{n+1}(T(x,\phi^*(x),R_{n+1})) \big]\mu(dx)
\end{align*} 
which implies the statement. Note that, in particular,  the 
minimization can be done pointwise, which yields the classical
Bellman equation
$$ V_n(x) = \inf_{\phi\in F} \Big(c_n(x,\phi(x)) +  \EE \big[ V_{n+1}(T(x,\phi(x),R_{n+1}))\big]\Big).$$
\end{remark}

\section{Application: LQ-problem}\label{sec:application}
In this section,  we apply our findings to the following LQ-problem considered in  \cite{bjork2021}, Sec. 9:
\begin{equation}
 \mathbb{E}_{\mu_0}^\pi\Big[ \sum_{k=0}^{N-1} A_k^2 \Big]
 +(\mathbb{E}_{\mu_0}^\pi[X_N])^2\to\inf_{\pi} \label{form:lq}
 \end{equation}
 where the next state evolves according to the equation $$X_{n+1}=b X_n+d A_n +\sigma R_{n+1}.$$ 
 Here $X_n\in E=\R,$ 
 $A_n\in A=\R$ 
 and $R_1,\ldots ,R_N$ 
 are i.i.d. random variables with a finite 
 second moment and $\E R_n=0$ for $n=1,\ldots,N$, 
 $b,d \in \R,$ $\sigma>0.$
Hence, within this framework  in the 
{\it population version} MDP, the transition function is
$\hat T_n(\mu, \phi)=\mu',$ where 
$$\mu'(B)= \int \PP(T_n(x,\phi(x),R_{n+1}) \in B)\mu(dx) 
= \int \PP(b x+ d \phi(x)+\sigma R_{n+1} \in B)\mu(dx)  $$ for a Borel set $B\subset E.$ 
Moreover, $$\hat c_n(\mu,\phi)= \int \phi(x)^2 \mu(dx) $$ is the one-stage cost, whereas
 $$\hat c_N(\mu) =  \Big(\int x\mu(dx)\Big)^2$$ is the terminal cost.

In order to solve the problem, we first 
consider the final time point, where 
$$J_N(\nu) = \Big(\int x\nu(dx)\Big)^2= \Big(\E_{N,\nu}[X_N]\Big)^2.$$ 
At time point $N-1,$  equation \eqref{eq:recursion2} becomes
\begin{align*}
J_{N-1}(\nu) &= \inf_{\phi\in F}\left( \E_{N-1,\nu}[\phi^2(X_{N-1})] + J_{N}(\hat T_{N-1}(\nu,\phi))\right)\\
 &=  \inf_{\phi\in F}\left( \E_{N-1,\nu}[\phi^2(X_{N-1})] + \Big(b \E_{N-1,\nu}[X_{N-1}]
 +d \E_{N-1,\nu}[\phi(X_{N-1})]  \Big)^2\right).
\end{align*} 
The minimizer is given by (see Appendix \ref{sec:appendix2})
$$ \hat\phi^*_{N-1}(\nu)(x) = - \frac{bd}{1+d^2}  \E_{N-1,\nu}[X_{N-1}],\quad \nu\in\hat E,\ x\in E.$$
Plugging this minimizer into the equation for $J_{N-1}$ we obtain:
$$  J_{N-1}(\nu) = \frac{b^2}{1+d^2}  \Big(\E_{N-1,\nu}[X_{N-1}]  \Big)^2.$$
Thus, we get again a quadratic form. Using  
the results in Appendix \ref{sec:appendix2}, it can be shown by induction 
that the optimal policy and value functions are given for $\nu\in\hat E$ by
\begin{align}\label{op:lq}
   \hat \phi^*_n(\nu)(x) &= -\frac{bd \beta_{n+1}}{1+d^2 \beta_{n+1}} \E_{n,\nu}[X_n], \quad x\in E,\\
    J_n(\nu) &= \beta_n \big( \E_{n,\nu}[X_n]\big)^2, \nonumber
\end{align}
where the constants $\beta_n$ are determined by the backward recursion 
$$\beta_N=1, \quad \quad
\beta_ n = \frac{\beta_{n+1} b^2}{1+d^2 \beta_{n+1}}.$$    
Hence, making use of Theorem \ref{thm:m} we conclude:

\begin{proposition} \label{plq:m} If
the initial distribution $\mu_0=\delta_{x_0},$ then 
 $J_0(\mu_0)=\beta_0  x_0^2$ 
 is the value of problem \eqref{form:lq} and the optimal policy  
 $\hat \pi^*=(\hat\phi^*_0,\ldots ,\hat\phi^*_{N-1})$ defined in (\ref{op:lq}) 
 is time-consistent in the population version MDP.
The policy $\pi^*=(\phi^*_0,\ldots,\phi^*_{N-1})$  defined as in \eqref{op:lq}, i.e., 
$$\phi^*_n(x)=\hat\phi^*_n(\mu_n)(x)\equiv -
\frac{bd \beta_{n+1}}{1+d^2 \beta_{n+1}} \E_{\mu_n}[X_n],\quad x\in E,$$
for $n=0,\ldots,N-1$,
is optimal in the original formulation. 
\end{proposition}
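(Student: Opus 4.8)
The plan is to read \eqref{form:lq} as a special case of the general non-additive MDP of Section \ref{sec:general}, with one-stage cost $c_n(x,a)=a^2$, terminal cost $c_N\equiv 0$, and non-additive data $h(x)=x$, $G(y)=y^2$; then indeed $\hat c_n(\mu,\phi)=\int\phi(x)^2\mu(dx)$ and $\hat c_N(\mu)=\big(\int x\,\mu(dx)\big)^2$, as recorded above. Well-posedness is immediate, since every one-stage cost and the terminal contribution $G(\E^\pi_{\mu_0}[h(X_N)])=(\E^\pi_{\mu_0}[X_N])^2$ are nonnegative, so the objective is well-defined in $[0,\infty]$. With this identification the whole statement reduces to two tasks: (a) solving the Bellman recursion \eqref{eq:recursion2} explicitly, and (b) quoting Theorem \ref{thm:m}.

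For task (a) I would argue by backward induction on $n$, with hypothesis $J_{n+1}(\nu)=\beta_{n+1}\big(\E_{n+1,\nu}[X_{n+1}]\big)^2$. The base case $n=N$ holds by definition with $\beta_N=1$. In the inductive step the decisive simplification is $\E R_{n+1}=0$: under the deterministic transition $\hat T_n(\nu,\phi)$ the noise drops out of the mean, so that $\E_{n+1,\hat T_n(\nu,\phi)}[X_{n+1}]=b\,\E_{n,\nu}[X_n]+d\,\E_{n,\nu}[\phi(X_n)]$. Substituting into \eqref{eq:recursion2} turns $J_n(\nu)$ into
\[
\inf_{\phi\in F}\Big(\E_{n,\nu}[\phi^2(X_n)]+\beta_{n+1}\big(b\,\E_{n,\nu}[X_n]+d\,\E_{n,\nu}[\phi(X_n)]\big)^2\Big).
\]
The integrand depends on $\phi$ only through $\E_{n,\nu}[\phi(X_n)]$ and $\E_{n,\nu}[\phi^2(X_n)]$; since the second term of the objective does not involve the latter and $\E_{n,\nu}[\phi^2(X_n)]\ge\big(\E_{n,\nu}[\phi(X_n)]\big)^2$ with equality for constant $\phi$, the infimum is attained at a constant decision rule. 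The ensuing scalar minimization (this is exactly the content of Appendix \ref{sec:appendix2}) produces $\hat\phi^*_n(\nu)(x)=-\frac{bd\beta_{n+1}}{1+d^2\beta_{n+1}}\,\E_{n,\nu}[X_n]$ and $J_n(\nu)=\frac{b^2\beta_{n+1}}{1+d^2\beta_{n+1}}\big(\E_{n,\nu}[X_n]\big)^2=\beta_n\big(\E_{n,\nu}[X_n]\big)^2$, which closes the induction and establishes \eqref{op:lq}.

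For task (b), since a genuine minimizer $\hat\phi^*_n$ was exhibited at each stage, Theorem \ref{thm:m} applies verbatim: $J_0(\mu_0)$ is the value of \eqref{form:lq}, the policy $\hat\pi^*=(\hat\phi^*_0,\ldots,\hat\phi^*_{N-1})$ is time-consistent in the population version MDP, and the forward-generated policy $\pi^*$ from \eqref{eq:sequence2} is optimal in the original formulation. It then remains only to specialize to $\mu_0=\delta_{x_0}$: here $\E_{0,\delta_{x_0}}[X_0]=x_0$, whence $J_0(\mu_0)=\beta_0 x_0^2$ and $\phi^*_n(x)=\hat\phi^*_n(\mu_n)(x)=-\frac{bd\beta_{n+1}}{1+d^2\beta_{n+1}}\,\E_{\mu_n}[X_n]$, as claimed.

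The only genuinely non-formal point — the one I would flag as the main obstacle — is justifying that the infimum over all measurable decision rules $\phi\in F$ is attained by a constant rule, thereby collapsing an infinite-dimensional minimization to a one-dimensional one. This rests on the Jensen-type inequality $\E[\phi^2]\ge(\E[\phi])^2$ and is precisely what Appendix \ref{sec:appendix2} supplies; once it is granted, the remainder is the mechanical backward recursion for $\beta_n$ together with a direct appeal to Theorem \ref{thm:m}.
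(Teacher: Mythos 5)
Your proposal is correct and follows essentially the same route as the paper: the backward induction on the quadratic ansatz $J_n(\nu)=\beta_n\big(\E_{n,\nu}[X_n]\big)^2$, the reduction of the infinite-dimensional minimization to a scalar one via the Jensen inequality $\E_\nu[\phi^2(X)]\ge\big(\E_\nu[\phi(X)]\big)^2$ (which is exactly what Appendix \ref{sec:appendix2} does), and the final appeal to Theorem \ref{thm:m} with the specialization $\mu_0=\delta_{x_0}$. Your explicit identification of the data $h(x)=x$, $G(y)=y^2$, $c_N\equiv 0$ and the remark that nonnegativity of the costs settles well-posedness make the appeal to the general framework slightly more explicit than the paper's, but the substance is identical.
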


Since the sequence of states (measures) evolves in a deterministic way,
we can compute $ \E_{\mu_n}[X_n]$ by a forward recursion as follows 
\begin{align*}
   \E_{\mu_1}[X_1] &=  \E_{\mu_0}[b X_0 + d \phi_0(X_0)] = b  \E_{\mu_0}[X_0] -
 \frac{bd^2 \beta_1}{1+d^2\beta_1}  \E_{\mu_0}[X_0]  = \E_{\mu_0}[X_0]  \frac{b}{1+d^2\beta_1}
\end{align*}
and in general
$$  \E_{\mu_n}[X_n] =  \E_{\mu_0}[X_0] \frac{b^n}{\prod_{k=1}^n (1+d^2\beta_k)}.$$
Thus, we get that (in case $\mu_0=\delta_{x_0}$)
\begin{align*}  
\phi^*_n(x) &= -\frac{b^{n+1}d \beta_{n+1}} {\prod_{k=1}^{n+1} (1+d^2\beta_k)} \E_{\mu_0}[X_0]
=-\frac{b^{n+1}d \beta_{n+1}}{\prod_{k=1}^{n+1} (1+d^2\beta_k)} x_0,\quad x\in E,\\
 J_n(\mu_n) &= \beta_n \Big( \E_{\mu_0}[X_0] \frac{b^n}{\prod_{k=1}^n (1+d^2\beta_k)} \Big)^2=
 \beta_n \Big( x_0 \frac{b^n}{\prod_{k=1}^n (1+d^2\beta_k)} \Big)^2.
\end{align*}
Hence, the optimal policy in the original model is semi-Markov. Although it is independent of the current state, it does depend on the initial state.

We now compare our solution to the equilibrium strategy given 
in \cite{bjork2021}, 
where the value function is computed 
by backward induction with the help of a system   \`a la Bellman equations. For this approach we also have to assume that the variance of $R_n$ exist and is equal to 1. The value functions are of the form 
$$ V^e_n(x)= \beta_n x^2 + \gamma_n,\quad x\in E,$$
for $n=0,\ldots,N.$\footnote{There is a typo in the recursion for $A_n$ on p. 97 in \cite{bjork2021}. In our setting, $A_n$ is exactly $\beta_n$.} 
The constants $\gamma_n$ are 
computed recursively from $\gamma_N=0$ by
$$    \gamma_n = \gamma_{n+1}+\sigma^2(\beta_{n+1}-\alpha_{n+1}^2)$$
with  $\alpha_N=1$ and
\begin{align*}
    \alpha_n &= \alpha_{n+1}\frac{b}{1+d^2\beta_{n+1}}.
\end{align*}
The equilibrium strategy is given by
\begin{equation}
\label{equ_pol_LQ}
\varphi_n(x) = -\frac{bd\beta_{n+1}}{1+d^2 \beta_{n+1}}x, \quad x\in E,
\end{equation}
for $n=0,\ldots,N-1.$

In order to compare the two approaches, consider the value functions under our optimal policy and the equilibrium strategy in \eqref{equ_pol_LQ}, when the initial state is $x_0.$ Then, $J_0(\delta_{x_0})=\beta_0 x_0^2$ and $V^e_0(x_0)=\beta_0x_0^2+\gamma_0$.
However, the  function $V^e_0(x_0)$ depends on $\sigma$ through the constant $\gamma_0$ 
and therefore, 
it can be arbitrary large under the equilibrium policy in  (\ref{equ_pol_LQ}).
Note that $\gamma_0>0,$ which can be proved
by induction. Indeed, we show that $\beta_n \ge \alpha_n^2,$ where the inequality is strict for $n=0,\ldots,N-1$. Obviously $\beta_N=\alpha_N^2.$ Assume that $\beta_{n+1} > \alpha_{n+1}^2,$ Thus, for $b,d\neq 0,$
we have 
\begin{align*}
    \beta_n -\alpha_n^2 &=  \frac{b^2 \beta_{n+1}}{1+d^2\beta_{n+1}} -  \alpha_{n+1}^2\frac{b^2}{(1+d^2\beta_{n+1})^2} \\
    &= \frac{b^2}{(1+d^2\beta_{n+1})^2} \Big( \beta_{n+1}(1+d^2\beta_{n+1}) -\alpha_{n+1}^2\Big) >0.
\end{align*}
The statement easily follows, because  $\beta_{n+1}> 0.$ Hence, $J_0(\delta_{x_0})<V^e_0(x_0),$ which implies that
the equilibrium strategy obtained by backward induction 
in \cite{bjork2021} is suboptimal. 
For instance, taking $x_0=0$
the best thing one can do is take $A_n\equiv 0,$ for $n=0,\ldots,N-1.$ In this case, $ \phi^*_n(x)\equiv 0$ for every $x\in E$ and $n=0,\ldots, N-1,$ and  $J_0(\delta_{x_0})=0.$ 
However, under the policy in (\ref{equ_pol_LQ}), we have $V^e_0(0)=\gamma_0>0.$

\begin{remark}
When we make some distributional assumptions about the random variables 
$R_n$, our approach 
simplifies in some cases. For example, suppose that in addition $R_n\sim 
N(0,\tau_n^2)$ is normally distributed, $X_0\sim N(\rho_0,\sigma_0^2)$ is 
normally distributed and we restrict to linear decision rules 
$\phi_n(x)=m_n x+t_n$ with some $m_n,t_n\in\R.$ Then it is easy to see by 
induction that $X_n\sim N(\rho_n,\sigma_n^2)$ is again normally 
distributed. The parameters evolve according to
    \begin{align*}
        \rho_{n+1} &= (b+dm_n)\rho_n+d t_n,\\
        \sigma_{n+1}^2 &= (b+dm_n)^2  \sigma_n^2+\sigma^2 \tau_{n+1}. 
    \end{align*}
In this case, it is then enough to choose as a state the pair: mean and variance, 
since it determines the distribution already. Thus, we can work  with a 
state space of lower dimension.
\end{remark}


\section{Extensions and Connection to mean-field MDPs}\label{sec:meanfield}
The problems considered in Section \ref{sec:general} can be generalized in various ways. First, instead of deterministic, Markovian decision rules in the original MDP one could also consider randomized Markovian decision rules. They are given by transition kernels from $E$ to $A,$ i.e. $x\mapsto \phi(x)(B)$ is measurable for every measurable $B\subset A$ and $\phi(x)$ is a probability distribution on $A.$ This translates in the population version to a transition law $\hat T_n(\mu,\phi)=\mu'$ given by
$$\mu'(B)= \int\int \PP(T_n(x,a,R_{n+1}) \in B)\mu\otimes\phi(d(x,a)) 
 $$ for a Borel set $B\subset E$ and
 $$\hat c(\mu,\phi)= \int\int c(x,a)\mu\otimes\phi(d(x,a)).  $$
The interpretation behind this is that at time $n,$ when individuals are distributed according to $\mu_n,$ all individuals in state $x$ flip a coin according to $\phi(x)$ to determine their actions.

Another generalization would be to allow the transition function and the cost function to depend in a more sophisticated way on $\mu.$ For example $\hat c(\mu,\phi)$ could be a quantile of $\mu$ or an entropy measure.

Also we have not considered constraints on actions. Often in MDP theory the admissible actions are constrained given the state. It would be possible to model probability constraints in such a way that the marginal probability that the process stays above a threshold $\beta$  is bounded  below by $\alpha\in(0,1)$. More precisely we could define
$$D(\mu) := \Big\{ \phi\in F : \hat \mu := \hat T(\mu,\phi) \mbox{ satisfies } \int 1_{[x\ge \beta]}\hat\mu(dx) \ge \alpha\Big\} $$
as the set of admissible actions such that in the next step the probability constraint is satisfied.

All these generalizations lead to so-called mean-field MDPs, where the behavior and interaction of an infinite number of cooperative players is modeled (see e.g. \cite{bauerle2021mean,pham2016discrete}). Of course other target criteria may also be considered.

\section{Conclusion}\label{sec:conclusion}
We suggested a new approach to non-separable MDPs which we called {\em population version} and which is inspired by mean-field MDPs. In this approach we consider the distribution process of the states instead of the states themselves. The advantage of this method is twofold. First it admits a Bellman optimality equation for the solution of the problem and second it provides a time-consistent policy. We demonstrated its usefulness with examples taken from the realm of LQ-problems. 

\section{Appendix:  Solution of the one step  problem}\label{sec:appendix}
\subsection{Mean-variance problem }\label{sec:appendix1}
In this section, we find a solution to (\ref{eq:onestep}). 
Our objective is to find an element $\phi\in F$ (if exists) that minimizes
$$(1+i)^2\left[
\E_{\nu} \Big[(X +\phi(X) \cdot R)^2 -\frac{2 \lambda} {1+i}(X+\phi(X)\cdot R) \Big]
-\Big(\E_{\nu} \Big[X+\phi(X)\cdot R\Big]\Big)^2\right].$$
Here, we skip the time period. Recall that $X\sim \nu.$ Since $\E [Y] =\mbox{arg}\min_{b\in\R}\E(Y-b)^2,$
 we may write our problem as follows
\begin{equation}\label{optim:onestep}
(1+i)^2 \inf_{\phi\in F}\min_{b\in\R}
\E_{\nu} \Big[(X +\phi(X) \cdot R-b)^2 -\frac{2 \lambda} {1+i}(X+\phi(X)\cdot R) \Big].\end{equation}
First we solve the outer problem and then the inner one, 
because we may interchange the infima. Assume that $X=x$ and $y=\phi(x).$
Then, we have 
$$
\inf_{y\in\R^d}
\E \Big[(x +y \cdot R-b)^2 -\frac{2 \lambda} {1+i}(x+y\cdot R) \Big].
$$
The solution of this problem is
$$y=\phi(x)=\Big(b+\frac\lambda{1+i} -x\Big)C^{-1}\E R.$$ 
where $C=\E[RR^\top].$ 
Hence, plugging $\phi(X)=\Big(b+\frac\lambda{1+i} -X\Big)C^{-1}\E R$ into   (\ref{optim:onestep}) 
we have to find the minimizer  with respect to $b\in\R$ for
\begin{eqnarray*}\lefteqn{
\E_{\nu} \Big[X^2-\frac{2\lambda}{1+i}X
+b^2-2Xb+}\\&&
\left(b+\frac\lambda{1+i}-X\right)^2(R^\top C^{-1}\E R)^2+\left(2X-2b-\frac{2\lambda}{1+i}\right)
\left(b+\frac\lambda{1+i}-X\right)R^\top C^{-1}\E R\Big]\\&=&
\E_{\nu} \Big[X^2-\frac{2\lambda}{1+i}X
-2bX+\left(b+\frac\lambda{1+i}-X\right)^2
((R^\top C^{-1}\E R)^2-2R^\top C^{-1}\E R)\Big]+b^2\\&=&\E_{\nu} \Big[X^2-\frac{2\lambda}{1+i}X\Big]
-2b\E_\nu X-\E_\nu
\left(b+\frac\lambda{1+i}-X\right)^2
(\E R)^\top C^{-1}\E R +b^2,
\end{eqnarray*}
where  the last equality is due to the independence of $R$ and $X.$
The solution is 
$$b=\E_\nu X+\frac\lambda{1+i}\ \frac{(\E R)^\top C^{-1}\E R }{1-(\E R)^\top C^{-1}\E R}.$$
This implies that
$$\phi(x)=\Big(\E_\nu X+\frac\lambda{1+i}\ \frac{1 }{1-(\E R)^\top C^{-1}\E R}-x\Big)C^{-1}\E R.$$
 Hence, the solution for the the last step is
\begin{eqnarray*}
\phi_{N-1}(\nu)(x)&=&\Big(\E_{N-1,\nu} X+\frac\lambda{1+i_N}\ \frac{1 }{1-
(\E R_N)^\top C_N^{-1}\E R_N}-x\Big)C_N^{-1}\E R_N\\&=&
 \Big(\E_{N-1,\nu} X+\frac\lambda{(1+i_N)(1-\ell_N)}-x\Big)C_N^{-1}\E R_N,
\end{eqnarray*}
where we  use (\ref{eq:ell}).

\subsection{LQ-problem}\label{sec:appendix2}
For the following computation we skip the time index $N-1$ and suppose that $\beta>0$ is an additional parameter. We consider
\begin{align*}
    J(\nu) &=  \inf_{\phi\in F} \left(\E_{\nu}[\phi^2(X)] + \beta \Big(b \E_{\nu}[X]+d \E_{\nu}[\phi(X)]  \Big)^2\right) \\
    &= \beta b^2 \big(\E_{\nu}[X]\big)^2+ \inf_{\phi\in F}  \left(\E_{\nu}[\phi^2(X)] + 2bd \beta \E_{\nu}[X]   \E_{\nu}[\phi(X)] + \beta d^2 \Big( \E_{\nu}[\phi(X)]  \Big)^2\right).
\end{align*} 
For the moment suppose that  a minimizer exists and has a certain expectation $\E_{\nu}[\phi(X)]=\bar \phi.$ We have to choose $\phi$ such that the second moment $\E_{\nu}[\phi^2(X)] $ is minimized. Since by the Jensen inequality
$$ \E_{\nu}[\phi^2(X)] \ge \Big(  \E_{\nu}[\phi(X)]\Big)^2 $$
it is optimal to choose $\phi(X)\equiv \bar \phi.$ Hence, we have to solve
\begin{align*}
    J(\nu) &=  \beta b^2 \big(\E_{\nu}[X]\big)^2+ \inf_{\bar\phi\in \R} \left(\bar\phi^2  + 2bd \beta \E_{\nu}[X]   \bar\phi + \beta d^2 \bar\phi^2\right).
\end{align*} 
The minimum point is given at
$$\hat \phi(\nu)(x)\equiv \bar\phi=- \frac{bd\beta}{1+d^2\beta}  \E_{\nu}[X].$$
If we plug this into the equation for $J$ we obtain
\begin{align*}
    J(\nu) &=  \beta b^2 \big(\E_{\nu}[X]\big)^2+ \frac{\big(bd\beta   \E_{\nu}[X]\big)^2}{(1+d^2\beta)^2} - 2  \frac{\big(bd\beta  \E_{\nu}[X]\big)^2}{1+d^2\beta} +\beta d^2 \frac{\big(bd \beta  \E_{\nu}[X]\big)^2}{(1+d^2\beta)^2}\\
    &=  \beta b^2\big(\E_{\nu}[X]\big)^2  \Big( 1+ \frac{d^2\beta}{(1+d^2\beta)^2}-2 \frac{d^2\beta}{1+d^2\beta}+ \frac{d^4\beta^2}{(1+d^2\beta)^2}\Big)\\
    &=  \big(\E_{\nu}[X]\big)^2 \frac{\beta b^2}{1+d^2\beta}.
\end{align*}

\bibliographystyle{amsplain}
\bibliography{literature}

\providecommand{\bysame}{\leavevmode\hbox to3em{\hrulefill}\thinspace}
\providecommand{\MR}{\relax\ifhmode\unskip\space\fi MR }
\providecommand{\MRhref}[2]{%
  \href{http://www.ams.org/mathscinet-getitem?mr=#1}{#2}
}
\providecommand{\href}[2]{#2}
\begin{thebibliography}{10}

\bibitem{barbieri2020mean}
Fabio Barbieri and Oswaldo~LV Costa, \emph{A mean-field formulation for the
  mean-variance control of discrete-time linear systems with multiplicative
  noises}, International Journal of Systems Science \textbf{51} (2020), no.~10,
  1825--1846.

\bibitem{basak2010dynamic}
Suleyman Basak and Georgy Chabakauri, \emph{Dynamic mean-variance asset
  allocation}, The Review of Financial Studies \textbf{23} (2010), no.~8,
  2970--3016.

\bibitem{bauerle2021mean}
Nicole B{\"a}uerle, \emph{Mean field {M}arkov decision processes}, To appear
  in: Applied Mathematics and Optimization. (2023).

\bibitem{bauerle2011markov}
Nicole B{\"a}uerle and Ulrich Rieder, \emph{Markov decision processes with
  applications to finance}, Springer Science \& Business Media, 2011.

\bibitem{bensoussan2014mean}
Alain Bensoussan, Kwok~Cheun Wong, and Sheung Chi~Phillip Yam,
  \emph{Mean-variance pre-commitment policies revisited via a mean-field
  technique}, 2012 Recent Advances in Financial Engineering: Proceedings of the
  International Workshop on Finance 2012, World Scientific, 2014, pp.~177--198.

\bibitem{bjork2021}
Tomas Bj{\"o}rk, Marianna Khapko, and Agatha Murgoci, \emph{Time-inconsistent
  control theory with finance applications}, Springer Nature Switzerland AG,
  2021.

\bibitem{bjork2014mean}
Tomas Bj{\"o}rk, Agatha Murgoci, and Xun~Yu Zhou, \emph{Mean--variance
  portfolio optimization with state-dependent risk aversion}, Mathematical
  Finance \textbf{24} (2014), no.~1, 1--24.

\bibitem{campbell2002strategic}
John~Y Campbell and Luis~M Viceira, \emph{Strategic asset allocation: portfolio
  choice for long-term investors}, Clarendon Lectures in Economic, 2002.

\bibitem{cui2014unified}
Xiangyu Cui, Xun Li, and Duan Li, \emph{Unified framework of mean-field
  formulations for optimal multi-period mean-variance portfolio selection},
  IEEE Transactions on Automatic Control \textbf{59} (2014), no.~7, 1833--1844.

\bibitem{czichowsky2013time}
Christoph Czichowsky, \emph{Time-consistent mean-variance portfolio selection
  in discrete and continuous time}, Finance and Stochastics \textbf{17} (2013),
  no.~2, 227--271.

\bibitem{elliott2013discrete}
Robert Elliott, Xun Li, and Yuan-Hua Ni, \emph{Discrete time mean-field
  stochastic linear-quadratic optimal control problems}, Automatica \textbf{49}
  (2013), no.~11, 3222--3233.

\bibitem{filar1989variance}
Jerzy~A Filar, Lodewijk~CM Kallenberg, and Huey-Miin Lee,
  \emph{Variance-penalized {M}arkov decision processes}, Mathematics of
  Operations Research \textbf{14} (1989), no.~1, 147--161.

\bibitem{guo2012mean}
Xianping Guo, Liuer Ye, and George Yin, \emph{A mean--variance optimization
  problem for discounted {M}arkov decision processes}, European Journal of
  Operational Research \textbf{220} (2012), no.~2, 423--429.

\bibitem{li2000optimal}
Duan Li and Wan-Lung Ng, \emph{Optimal dynamic portfolio selection: Multiperiod
  mean-variance formulation}, Mathematical Finance \textbf{10} (2000), no.~3,
  387--406.

\bibitem{mannor2011mean}
Shie Mannor and John Tsitsiklis, \emph{Mean-variance optimization in {M}arkov
  decision processes}, arXiv preprint arXiv:1104.5601 (2011).

\bibitem{mitrinovic}
Dragoslav~S. Mitrinovi\'c, \emph{Elementary inequalities}, P. Noordhoff
  Limited, Groningen, 1964.

\bibitem{ni2016time}
Yuan-Hua Ni, \emph{Time-inconsistent mean-field stochastic linear-quadratic
  optimal control}, 2016 35th Chinese Control Conference (CCC), IEEE, 2016,
  pp.~2577--2582.

\bibitem{ni2014indefinite}
Yuan-Hua Ni, Ji-Feng Zhang, and Xun Li, \emph{Indefinite mean-field stochastic
  linear-quadratic optimal control}, IEEE Transactions on Automatic Control
  \textbf{60} (2014), no.~7, 1786--1800.

\bibitem{pham2016discrete}
Huy{\^e}n Pham and Xiaoli Wei, \emph{Discrete time {McKean--Vlasov} control
  problem: a dynamic programming approach}, Applied Mathematics \& Optimization
  \textbf{74} (2016), no.~3, 487--506.

\bibitem{sobel1994mean}
Matthew~J Sobel, \emph{Mean-variance tradeoffs in an undiscounted {MDP}},
  Operations Research \textbf{42} (1994), no.~1, 175--183.

\bibitem{vigna}
Elena Vigna, \emph{On time consistency for mean-variance portfolio selection},
  International Journal of Theoretical and Applied Finance \textbf{23} (2020),
  no.~6, 1--22.

\bibitem{yoshida2018solution}
Naohiro Yoshida, \emph{On a solution to the mean-variance portfolio selection
  via the mean-variance hedging in discrete time}, Global Journal of Pure and
  Applied Mathematics \textbf{14} (2018), no.~11, 1509--1517.

\end{thebibliography}

\end{document}